\documentclass[12pt,UTF8]{article}
\usepackage{amssymb,latexsym}
\usepackage{mathtools}
\usepackage{color}

\usepackage[colorlinks, backref=page
]{hyperref}
\renewcommand{\baselinestretch}{1.2}
\usepackage{amsmath,amsthm}
\usepackage{indentfirst}
\theoremstyle{plain}
\newtheorem{thm}{Theorem}

\newtheorem{lem}{Lemma}

\theoremstyle{definition}
\newtheorem{defn}{Definition}

\newtheorem{rem}{Remark}

\allowdisplaybreaks
\def\d {\mathrm{d}}
\def\Ric {\mathrm{Ric}}

\def\na{\ensuremath{\nabla}}

\usepackage{titlesec}
\renewcommand{\b}[1]{\mathbf{#1}}

\renewcommand{\d}[1]{\mathbb{#1}}

\renewcommand{\r}[1]{\mathrm{#1}}

\renewcommand{\(}{\left(}
\renewcommand{\)}{\right)}

\renewcommand{\leq}{\leqslant}
\renewcommand{\geq}{\geqslant}

\newcommand{\be}{\b e}

\newcommand{\bm}{\b m}

\newcommand{\dv}{\d v}

\newcommand{\sep}{\r{sep}}

\DeclareMathOperator{\tr}{tr}

\DeclareMathSymbol{\twoheadrightarrow} {\mathrel}{AMSa}{"10}
\newcommand{\norm}[1]{\left\lVert#1\right\rVert}%norm%
\newcommand{\la}{\lambda}%Lie algebra%
\newcommand{\Rmn}[1]{\uppercase\expandafter{\romannueral#1}}%uppercase roman number%

\def\Ric{\mathrm{Ric\, }}

\def\tr{\mathrm{tr\, }}
\numberwithin{equation}{section}

\makeatletter
\newcommand{\fakephantomsection}{%
	\Hy@MakeCurrentHref{\@currenvir. \the\Hy@linkcounter}
	\Hy@raisedlink{\hyper@anchorstart{\@currentHref}\hyper@anchorend}%
}
\makeatother

\allowdisplaybreaks

\def\a{\alpha}
\def\b{\beta}

\def\({\left (}
\def\){\right )}
\def\<{\langle}
\def\>{\rangle}

\newcommand{\bel}[1]{\begin{equation}\label{#1}}
	
	\newcommand{\beq}{\begin{equation}}

			\newcommand{\ea}{\end{eqnarray}}
		\newcommand{\qe}{\end{equation}}
	\newcommand{\eeq}{\end{equation}}

\def\d{\mathrm{d}}

\def \d {\mathrm{d}}
\def \Ric {\mathrm{Ric}}

\def\a{\alpha}

\def\la{\lambda}

\title{ Notes on harmonic-Ricci flow on surface }

\author{Xiang-Zhi Cao\thanks{School of Information Engineering, Nanjing Xiaozhuang University, Nanjing 211171, China}}

\begin{document}
	
	\maketitle 
	\tableofcontents
	
	\begin{abstract}
	In this note, we want to establish several formulas about functionals along harmonic Ricci flow on surface with boundary.	
	
	{\bf Keyword:} harmonic ricci flow, surface with boundary.
	
		{\bf MSC:} 58
	\end{abstract}
	\section{Introduction and Preliminary}
	
	As we know, Perelman \cite{zbMATH05050941,zbMATH05050940,zbMATH05050939} solved Poincare's conjecture via Ricci flow.  Later, M\"{u}ller \cite{MR2961788} combined Ricci flow with harmonic map heat flow, introducing  harmonic-Ricci flow.
	\begin{defn}[cf. \cite{MR2961788} ]
	The triples 	$(M,g(t),\phi)$ is called harmonic-Ricci flow if		
	\begin{equation}\tag{RH}
			\begin{cases}
			\frac{\partial  g}{ \partial t }=-2\Ric+2\a \na\phi\otimes \na\phi\\
			\frac{\partial  \phi}{ \partial t  } =\tau(\phi).   	
		\end{cases}			  
	\end{equation}

	\end{defn}
	
	In the past decades,  harmonic-Ricci flow was an  active research topic. One can refer  to these works (\cite{MR3547931,MR3300708,MR3175257,MR3729736,MR3163480}) for  progress on harmonic-Ricci flow.

	There has been  many results about the monotinicity quantity on closed Riemannian manifold such as Perelman's $F$-functional and $W$-functional. One can also refer to these works (\cite{MR2961788,MR4452202,MR3175257,MR3163480}).
	
	In the past decades, there are some progress  about Ricci flow on manifold with boundary,  such as (\cite{MR4003012,MR4785569,MR3557306,MR3448425,MR4959090,MR1387799}\cite{MR3037998}).

	In this paper, we consider  pseudo-harmonic-Ricci flow,
	
	\begin{defn}
		The triples 	$(M,g(t),\phi)$ is called psudo-harmonic-Ricci flow if		
	\begin{equation}\label{ps}
		\begin{split}
			\begin{cases}
				\frac{\partial  g}{ \partial t }=-2\Ric+2\a \na\phi\otimes \na\phi\\
				\tau(\phi)=0\\
				\frac{\partial  \phi}{ \partial t  }=0.	
			\end{cases}	
		\end{split}
	\end{equation}

	\end{defn}
Obviously, 	psudo-harmonic-Ricci flow is special harmonic  Ricci flow.

Our main purpose  of this paper  is   to obtain  monotonicity formula  of several functional, such as entropy functional (see \eqref{333}), Perelman type $\mathcal{F}$-entropy, Perelman type $\mathcal{W}$-entropy on surface with boundary along the flow \eqref{ps}.

		{\bf Notations:} We  define $\tau:=T-t$. We write $\tau(\phi)$  for the tension field of the map $\phi.$  By abuse of notations,  these two kinds of $\tau$ can be clarified from the context. As in literature, we also use the notations	$ \operatorname{Sc}:=\Ric-\alpha \na\phi\otimes \na\phi $, $ S=\tr_g(\operatorname{Sc})$, we use the symbol $R$ to denote the scalar curvatture of the metric $g$. we use $\boldsymbol{n}$ to denote the unit normal vector field at the point of the boundary. We use $\dv$ to denote the volume measure, and $\d A$ to represent the measure  of the boundary. As we know, for surface   the formula  $\Ric=-2R g$ holds. We denote  $\bar{S}$ by $\frac{\int_M S \dv}{\int_M \dv}$.

		By the results of harmonic-Ricci flow, using the above notations, we recall a result
	\begin{lem}[cf.\cite{MR2961788} ] Let $(M,g(t),\phi)$ be psudo-harmonic-Ricci flow (cf. \eqref{ps}) . Then
		\begin{equation*}
			\begin{split}
				\frac{\partial S }{ \partial t }=\Delta_g S+2\norm{Sc}^2+2\alpha \norm{\tau_g(\phi)}^2.
			\end{split}
		\end{equation*}
	\end{lem}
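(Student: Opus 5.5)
Since the lemma concerns only the evolution of $S=R-\alpha\norm{\na\phi}^2$, the plan is to compute $\partial_t R$ and $\partial_t\norm{\na\phi}^2$ separately and then combine them. Write $h:=\partial_t g=-2\operatorname{Sc}$, with $\operatorname{Sc}_{ij}=R_{ij}-\alpha\na_i\phi\na_j\phi$. No boundary terms arise, since the computation is pointwise. We also have $\partial_t\phi=0$ and $\tau_g(\phi)=0$ for the flow \eqref{ps}. We nevertheless keep the general harmonic-Ricci flow structure, so as to recover the stated form containing $\norm{\tau_g(\phi)}^2$, which is zero here.

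\emph{Step 1 (scalar curvature).} We use the standard variation formula
\[
\partial_t R=-\Delta \tr_g h+\na^i\na^j h_{ij}-\langle h,\Ric\rangle .
\]
With $h=-2\operatorname{Sc}$ this becomes $\partial_t R=2\Delta S-2\na^i\na^j\operatorname{Sc}_{ij}+2\langle \operatorname{Sc},\Ric\rangle$. The contracted Bianchi identity gives $\na^i R_{ij}=\frac12\na_j R$. Together with $\na^i(\na_i\phi\na_j\phi)=\tau(\phi)\na_j\phi+\frac12\na_j\norm{\na\phi}^2$, this yields $\na^i\operatorname{Sc}_{ij}=\frac12\na_j S-\alpha\,\tau(\phi)\na_j\phi$. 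Hence $\na^i\na^j\operatorname{Sc}_{ij}=\frac12\Delta S-\alpha\,\mathrm{div}(\tau(\phi)\na\phi)$.

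\emph{Step 2 (energy density).} Differentiating $\norm{\na\phi}^2=g^{ij}\na_i\phi\na_j\phi$ gives
\[
\partial_t\norm{\na\phi}^2=2\operatorname{Sc}^{ij}\na_i\phi\na_j\phi+2\langle\na\phi,\na\partial_t\phi\rangle .
\]
We then rewrite $\Delta\norm{\na\phi}^2$ with the Bochner formula:
\[
\Delta\norm{\na\phi}^2=2\norm{\na^2\phi}^2+2\langle\na\phi,\na\tau(\phi)\rangle+2\Ric(\na\phi,\na\phi).
\]
For a general target this carries an extra target-curvature term. The $\na^2\phi$-terms are the delicate part. The claimed formula contains no $\norm{\na^2\phi}^2$, so those terms must cancel against terms coming from $\na^i\na^j\operatorname{Sc}_{ij}$ when everything is assembled.

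\emph{Step 3 (assembly).} Combining the two steps, we expand $\langle\operatorname{Sc},\Ric\rangle-\alpha\operatorname{Sc}(\na\phi,\na\phi)=\langle\operatorname{Sc},\operatorname{Sc}\rangle=\norm{\operatorname{Sc}}^2$. After substituting, the Laplacian terms reduce to exactly $\Delta S$, and the divergence terms combine to $2\alpha\norm{\tau(\phi)}^2$. This follows the bookkeeping of M\"uller \cite{MR2961788}, which uses $\partial_t\phi=\tau(\phi)$ for the general flow.

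The main obstacle is this bookkeeping: showing that the $\langle\na\phi,\na\tau\rangle$, Hessian and divergence terms all collapse into $2\alpha\norm{\tau}^2$. I would first check the computation on the model case where $\phi$ is constant, which reduces to the Ricci-flow identity $\partial_tR=\Delta R+2\norm{\Ric}^2$. In the setting of \eqref{ps}, every $\tau$-term vanishes, which simplifies the verification. Since the statement is quoted from \cite{MR2961788}, one may alternatively just specialize M\"uller's general evolution formula to $\tau(\phi)=0$.
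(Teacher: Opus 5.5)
Your proposal is correct, but it takes a different, self-contained route. The paper gives no derivation. It simply quotes M\"uller's evolution equation, and the only input is that \eqref{ps} is a special case of (RH), since $\partial_t\phi=0=\tau(\phi)$. Your computation gains two things the citation leaves implicit. It shows the identity is pointwise, so no boundary condition is needed; this matters because the paper later uses the lemma on surfaces with boundary. It also shows exactly where the $\phi$-equation enters.

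The computation is shorter than you expect: Steps 1 and 2 already finish the proof, with no Bochner formula. Insert $\na^i\na^j\operatorname{Sc}_{ij}=\frac12\Delta S-\a\,\mathrm{div}(\tau(\phi)\na\phi)$ and $\langle\operatorname{Sc},\Ric\rangle-\a\operatorname{Sc}(\na\phi,\na\phi)=\norm{\operatorname{Sc}}^2$, and use $\mathrm{div}(\tau(\phi)\na\phi)=\norm{\tau(\phi)}^2+\langle\na\phi,\na\tau(\phi)\rangle$. This gives
\[
\partial_t S=\Delta S+2\norm{\operatorname{Sc}}^2+2\a\norm{\tau(\phi)}^2+2\a\langle\na\phi,\na(\tau(\phi)-\partial_t\phi)\rangle .
\]
The last term vanishes whenever $\partial_t\phi=\tau(\phi)$. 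In particular it vanishes under \eqref{ps}, where the lemma reduces to $\partial_t S=\Delta S+2\norm{\operatorname{Sc}}^2$.

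There are no Hessian terms to cancel. The $\norm{\na^2\phi}^2$ terms, and the target-curvature term, appear only if you expand $\Delta\norm{\na\phi}^2$ or $\langle\na\phi,\na\tau(\phi)\rangle$ by Bochner, and they then recombine. So the Bochner detour and the ``main obstacle'' can simply be dropped. For a map into a Riemannian target, read Step 1's identity as $\na^i\langle\na_i\phi,\na_j\phi\rangle=\langle\tau(\phi),\na_j\phi\rangle+\frac12\na_j\norm{\na\phi}^2$; nothing else changes.
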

	
	Our main results are Theorem \ref{thm-1}, Theorem \ref{thm-4} and Theorem \ref{thm-7}. Throughout this paper, we assume $ \a$ to be positive constant.
	\section{ Entropy  on surface with boundary }
	
In this section, we generalize some results established  in	\cite{MR4785569} to pseudo-harmonic-Ricci flow. Specifically,  we introduce the  modified entropy
	
\begin{equation}\label{333}
	\begin{split}
		E_\partial(t)=\int_M S\log S \d v-\log(\bar{S})\int_M S \d v,
	\end{split}
\end{equation}
	provided that  $ S>0 .$	
	
	\begin{thm}\label{thm-1}Let $(M, g(t),\phi), t\in[0,T)$ be a pseudo-harmonic-Ricci flow with $S>0$ on compact surface with  boundary $\partial M$, $R(g_0)>0.$ We further assume the Neumann type boundary condition $ \frac{\partial  R}{ \partial \boldsymbol{n} }=0,\frac{\partial  \phi}{ \partial \boldsymbol{n} }=0$. Let f be a smooth function on $M$ solving
		$$
		\begin{aligned}
			\begin{cases}
				S+\Delta f=\bar{S}\\
			\frac{\partial  f}{ \partial \boldsymbol{n} }=0,		
			\end{cases}		
		\end{aligned}
		$$
		then
		$$
		\begin{aligned}
			\frac{\d}{\d t}E_\partial(t)&=-\int_M\a  \norm{\na \phi(\na f)}^2\d v\\
			&-\int_M S\norm{\na S-\na \log S}^2-2 \int_M \norm{\nabla^2 f-\frac{\Delta f}{2} g}^2 \d v\\
			&-\int_{\partial M} \Pi(\na f|_{\partial  M},\na f|_{\partial   M}) \d A.
		\end{aligned}
		$$
		In particular, if the boundary is geodesic convex, then the entropy $E_\partial(t)$ is non-decreasing.		
	\end{thm}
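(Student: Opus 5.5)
We follow the Hamilton/Chow entropy computation on surfaces, as adapted to the boundary case in \cite{MR4785569}, with $S$ in place of $R$. On a surface, $\operatorname{Sc}=\frac{R}{2}g-\a\na\phi\otimes\na\phi$, so the flow reads $\partial_t g=-2\operatorname{Sc}$. Hence $\partial_t \dv=-S\,\dv$, and by the lemma recalled from \cite{MR2961788},
\begin{equation*}
\partial_t S=\Delta S+2\norm{\operatorname{Sc}}^2+2\a\norm{\tau(\phi)}^2=\Delta S+2\norm{\operatorname{Sc}}^2,
\end{equation*}
since $\tau(\phi)=0$ along \eqref{ps}.

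\textbf{Step 1: differentiate the entropy.} Differentiating \eqref{333} gives
\begin{equation*}
\frac{\d}{\d t}E_\partial=\int_M \partial_t S\,\log S\,\dv+\int_M\partial_t S\,\dv-\int_M S^2\log S\,\dv-\frac{\d}{\d t}\Big(\log\bar S\int_M S\,\dv\Big).
\end{equation*}
We substitute the evolution of $S$. For the Laplacian terms we integrate by parts: $\int_M \Delta S\log S=-\int_M\frac{|\na S|^2}{S}+\int_{\partial M}\log S\,\partial_{\boldsymbol n}S$. The boundary term is killed by $\partial_{\boldsymbol n}R=0$ and $\partial_{\boldsymbol n}\phi=0$, since the latter gives $\partial_{\boldsymbol n}|\na\phi|^2=2\langle\na_{\boldsymbol n}\na\phi,\na\phi\rangle$; we still need to check this vanishes using the harmonic map equation and the Neumann condition. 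We treat the $\bar S$ term the same way. Because $\operatorname{vol}(t)'=-\int_M S$, the contribution from $\frac{\d}{\d t}\log\bar S$ combines with the rest to produce terms of the form $\int_M (S-\bar S)^2$-type expressions. Those are then rewritten using $S-\bar S=-\Delta f$.

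\textbf{Step 2: split $\norm{\operatorname{Sc}}^2$ and complete the squares.} We write $\norm{\operatorname{Sc}}^2=\frac{S^2}{2}+\norm{\operatorname{Sc}-\frac S2 g}^2$. In the trace-free part, the $\a\na\phi\otimes\na\phi$ component is paired with the $\na f$ terms; after integrating by parts with $\partial_{\boldsymbol n}f=0$, this yields $-\a\int_M\norm{\na\phi(\na f)}^2$. The key identity is the integrated Bochner formula with boundary,
\begin{equation*}
\int_M(\Delta f)^2=\int_M\norm{\na^2 f}^2+\int_M\Ric(\na f,\na f)+\int_{\partial M}\Pi(\na f,\na f)\,\d A,
\end{equation*}
valid because $\partial_{\boldsymbol n}f=0$; this is the source of the second fundamental form term. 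We use $\Ric=\frac R2 g$ and $R=S+\a|\na\phi|^2$. Then $\frac R2|\na f|^2=\frac S2|\na f|^2+\a\cdot\frac12|\na\phi|^2|\na f|^2$, and the $\phi$-part combines with $\a\,\langle\na\phi\otimes\na\phi,\cdot\rangle$ terms into $\a\norm{\na\phi(\na f)}^2$. Also $\norm{\na^2 f}^2-\frac12(\Delta f)^2=\norm{\na^2f-\frac{\Delta f}{2}g}^2$. Finally, the terms $\int_M |\na S|^2/S$, $\int_M\langle\na S,\na f\rangle$ and $\int_M S|\na f|^2$ are combined into the square $\int_M S\norm{\cdot}^2$ appearing in the statement. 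In the Hamilton case that square is $S|\na\log S+\na f|^2$, and we will match it to the paper's displayed form.

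\textbf{Main obstacle.} The delicate point is the bookkeeping of the $\a\na\phi\otimes\na\phi$ terms so that they assemble exactly into $-\a\int_M\norm{\na\phi(\na f)}^2$, together with verifying that every boundary integral except $\int_{\partial M}\Pi(\na f,\na f)$ vanishes. We would first do the computation with $\a=0$ to reproduce \cite{MR4785569}, and then track the $\phi$-terms separately, using $\tau(\phi)=0$ and $\partial_{\boldsymbol n}\phi=0$. Once the formula is established, the monotonicity statement for a geodesically convex boundary follows from the sign of $\Pi$.
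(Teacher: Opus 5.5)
Your outline follows the same route as the paper (the Kunikawa--Sakurai computation with $S$ in place of $R$). However, the step you defer as the ``main obstacle'' is exactly the one that fails, and it cannot be closed the way you describe.

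\textbf{The trace-free part.} On a surface, $\partial_t S=\Delta S+S^2+2\norm{\mathrm{Sc}^\circ}^2$ with $\mathrm{Sc}^\circ=-\alpha(\nabla\phi\otimes\nabla\phi)^\circ$. So $\int_M S\,\mathrm{d}v$ is \emph{not} conserved:
\[
\frac{\mathrm{d}}{\mathrm{d}t}\int_M S\,\mathrm{d}v=\int_{\partial M}\partial_{\boldsymbol n}S\,\mathrm{d}A+2\int_M\norm{\mathrm{Sc}^\circ}^2\mathrm{d}v .
\]
When you differentiate $\log\bar S\int_M S\,\mathrm{d}v$ with this, the trace-free part survives exactly as $2\int_M\norm{\mathrm{Sc}^\circ}^2\log(S/\bar S)\,\mathrm{d}v$. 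This term is algebraic, of order $\alpha^2$, quartic in $\nabla\phi$, and contains no derivative of $f$. Its sign is indefinite, because $\log(S/\bar S)$ changes sign ($S-\bar S$ has mean zero). No integration by parts against $\partial_{\boldsymbol n}f=0$ turns it into $\alpha\norm{\nabla\phi(\nabla f)}^2$.

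\textbf{The Bochner term.} In dimension two $\mathrm{Ric}(\nabla f,\nabla f)=\frac R2\norm{\nabla f}^2$ exactly. Via $R=S+\alpha\norm{\nabla\phi}^2$, Bochner therefore only ever produces $\alpha\norm{\nabla\phi}^2\norm{\nabla f}^2$. There is no $\langle\nabla\phi\otimes\nabla\phi,\nabla f\otimes\nabla f\rangle$ term anywhere in the computation for it to combine with.

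\textbf{The boundary term.} The boundary term you hoped to kill does not vanish either. With $\partial_{\boldsymbol n}\phi=0$ and $\Pi(X,Y)=\langle\nabla_X\boldsymbol n,Y\rangle$, one gets $\nabla^2\phi(\boldsymbol n,X)=-\Pi(X,\nabla\phi)$ for tangential $X$. Hence $\partial_{\boldsymbol n}S=2\alpha\,\Pi(\nabla\phi,\nabla\phi)$, and the harmonic map equation plays no role here.

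\textbf{What your plan actually gives.} Use your Bochner identity and the split $\int_M(\Delta f)^2=2\int_M\langle\nabla S,\nabla f\rangle-\int_M(\Delta f)^2$. Carried out honestly, this gives
\begin{align*}
\frac{\mathrm{d}}{\mathrm{d}t}E_\partial={}&-\int_M S\norm{\nabla\log S-\nabla f}^2\mathrm{d}v-2\int_M\norm{\nabla^2f-\tfrac{\Delta f}{2}g}^2\mathrm{d}v-\alpha\int_M\norm{\nabla\phi}^2\norm{\nabla f}^2\mathrm{d}v\\
&-2\int_{\partial M}\Pi(\nabla f,\nabla f)\,\mathrm{d}A+2\int_M\norm{\mathrm{Sc}^\circ}^2\log\frac{S}{\bar S}\,\mathrm{d}v+\int_{\partial M}\log\frac{S}{\bar S}\,\partial_{\boldsymbol n}S\,\mathrm{d}A .
\end{align*}
So the displayed identity requires extra hypotheses, for example $\mathrm{Sc}^\circ\equiv0$ (i.e.\ $\phi$ weakly conformal) and $\partial_{\boldsymbol n}S=0$. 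Even then it comes out with:
\begin{itemize}
\item $\norm{\nabla\phi}^2\norm{\nabla f}^2$ instead of $\norm{\nabla\phi(\nabla f)}^2$;
\item coefficient $2$ on the $\Pi$-term;
\item the square $S\norm{\nabla\log S-\nabla f}^2$;
\item all terms $\le0$ for convex boundary, i.e.\ $E_\partial$ non-increasing, not non-decreasing.
\end{itemize}
You therefore cannot ``match the paper's displayed form'' literally.

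\textbf{The paper's proof.} It follows your route and passes these points only by fiat. It replaces $2\norm{\mathrm{Sc}}^2(1+\log S)-S^2\log S$ by $2S^2$. It treats $\int_M S\,\mathrm{d}v$ as constant when it writes $\frac{\mathrm{d}}{\mathrm{d}t}(\log\bar S\int_M S)=v(M)\bar S^2$. It ignores $\partial_{\boldsymbol n}S$. And it silently identifies the Reilly term $\alpha\norm{\nabla\phi}^2\norm{\nabla f}^2$ with $\alpha\norm{\nabla\phi(\nabla f)}^2$. So it does not supply the missing step either.
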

	
\begin{rem}
Should	the conditon of pseudo-harmonic-Ricci flow  be replaced by harmonic Ricci flow, the Theorem may no longer hold.
\end{rem}	
	\begin{proof}
		Adjusting the proof of \cite{MR4785569} to our setting, we have 
		\begin{equation*}
			\begin{split}
					\frac{\d}{\d t}E_\partial(t)	&=\int_M \partial_t S(1+\log S)-S^2 \log S \, \d v-\partial_t(\log \bar{S}\int_M S \,\d v)\\
				&=\int_M (\Delta S+2\norm{Sc}^2+2\alpha \norm{\tau(\phi)}^2)(1+\log S)-S^2 \log S  \d v-v(M)\bar{S}^2\,\\
				&=\int_M (\Delta S \log S +2\norm{S}^2)\d v +\int_M 2\alpha \norm{\tau(\phi)}^2(1+\log S)\d v-v(M)\bar{S}^2\, \\
				&=\int_M (-S\norm{\na \log S}^2 +2\norm{S}^2)\d v +\int_M 2\alpha \norm{\tau(\phi)}^2(1+\log S)\d v-v(M)\bar{S}^2 .
			\end{split}
		\end{equation*}		
		Observe that 		
		$$
		\begin{aligned}
			\int_M \norm{S}^2 \d v = \int_M (\Delta f)^2\dv +v(M)\bar{S}^2  .
		\end{aligned}
		$$
Using this,  we deduce that 
		$$
		\begin{aligned}
			\frac{\d}{\d t}E_\partial(t)=-\int_M (S\norm{\na \log S}^2 -2\norm{\Delta f }^2)\d v +\int_M 2\alpha \norm{\tau(\phi)}^2(1+\log S)\d v.	
		\end{aligned}
		$$				
		By integration by parts and using $f_{\nu}=0$,  we get 		
		\begin{equation}\label{eq:proof3}
			\begin{split}
					&\quad \,\,\int_{M}  S\Vert \nabla f\Vert^2-2(\Delta f)^2+  S\Vert \nabla \log S \Vert^2 \,\dv\\ 
				&=\int_{M}S\Vert \nabla f\Vert^2+2g(\nabla f,\nabla \Delta f)+S\Vert \nabla \log S \Vert^2 \,\dv\\ 
				&=\int_{M}\,S \Vert \nabla f-\nabla \log S\Vert^2\,\dv.
			\end{split}
		\end{equation}
		Furthermore, using Reilly formula   gives 
		\begin{equation}\label{eq:proof4}
			\int_{M}\,2 \left(\Delta f \right)^{2}- R\Vert \nabla f \Vert^2-2\left\Vert \nabla^2 f \right\Vert^{2} \,\dv=2\int_{\partial M}\,  \left(\nabla_{\partial M} (f|_{\partial M}), \nabla_{\partial M} (f|_{\partial M}) \right) \,\d A.
		\end{equation}
	Adding \eqref{eq:proof3} and \eqref{eq:proof4}, we conclude that

		$$
		\begin{aligned}
			&\int_M (S\norm{\na \log S}^2-2\norm{\nabla^2f }^2-\a \norm{\na \phi}^2	\norm{\na f}^2\dv\\
			= &-\int_M S\norm{\na S-\na \log S}^2 \dv
			-\int_{\partial M} \Pi(\na f|_{\partial  M},\na f|_{\partial   M}) \d A	.
		\end{aligned}
		$$
		Combining these yields	
		$$
		\begin{aligned}
			\frac{\d}{\d t}E_{\partial}(t)=&\int_M 2\norm{\Delta f }^2\d v +\int_M 2\alpha \norm{\tau(\phi)}^2(1+\log S)\d v\\
			& \int_M -2\norm{\nabla^2f }^2-\a \norm{\na \phi}^2	\norm{\na f}^2 \dv\\
			&-\int_M S\norm{\na S-\na \log S}^2 \,\dv
			-\int_{\partial M} \Pi(\na f|_{\partial  M},\na f|_{\partial   M}) \,\d A.		
		\end{aligned}
		$$
It is now clearly that the claim of the Theorem   follows immediately.	
	\end{proof}

	%%%%%%%%%%%%%%%%%%%%%%%%%%%%%
	%%%%%%%%%%%%%%%%%%%%%%%%%%%%%
	%%%%%%%%%%%%%%%%%%%%%%%%%%%%%
	%\section{Two dimensional case}
	%
	%%%%%%%%%%%%%%%%%%%%%%%%%%%%%
	%\subsection{Proof of the main result}\label{sec:Proof of the main result}
	%
	%%%%%%%%%%%%%%%%%%%%%%%%%%%%%
	%\subsection{$K$-Ricci flow}\label{sec:KRF}

	%%%%%%%%%%%%%%%%%%%%%%%%%%%%
	%%%%%%%%%%%%%%%%%%%%%

	\section{Perelman type $\mathcal{F}$-entropy}
	
	Perelman's $F$-entropy and $W$-entropy  are defined  by respectivly
	$$\begin{aligned}
		F(g,f)=\int_M(R+\norm{\na f}^2)e^{-f} \d v	,
	\end{aligned}
	$$	
	and	
	$$W(g,f)=\int_M \tau(R+\norm{\na f}^2) +f-m)(4\pi\tau)^{-1} e^{-f}\d v.$$	
		Later,m$\ddot{u}$ller generalize  Perelman's $F$-entropy and $W$-entropy to the case of  harmonic-Ricci flow on compact closed manifold.  They introduced the following functional	
\begin{equation}\label{FV}
	\begin{split}
		\mathcal{F}(g,\phi,f)=\int_M(R-\alpha \norm{\na \phi}^2+\norm{\na f}^2)e^{-f} \d v,
	\end{split}
\end{equation}	and 	
	$$\mathcal{W}(g,\phi,f)=\int_M \tau(R-\alpha \norm{\na \phi}^2+\norm{\na f}^2) +f-m)(4\pi\tau)^{-1} e^{-f}\d v,$$	
	on $m$-dimensional manifold $(M^m,g).$
Moreover, they established  monotonicity formulas of the  functionals just mentioned.  In this paper we try to  extend  these results  to the case of surface with boundary.

	In the case of Ricci flow on surface with boundary, there is already some progress on surface with boundary, such as 	
	\begin{lem}[cf.\cite{MR4003012}]\label{lem:6}
		\label{firstvariation1}
	Let $(M^2,g)$ be surface with boundary. Denote $\delta g_{ij}=v_{ij}, \delta f = h, g^{ij}v_{ij}=v$. Then 
		\begin{equation}
			\begin{split}
				& \delta \int_M(R+\norm{\na f}^2)e^{-f} \d v\\
				=&\int_Me^{-f}\left[-v^{ij}\left(R_{ij}+\nabla_i\nabla_j f\right)
				+\right(\frac{v}{2}-h\left)\left(2\Delta_g f-\left|\nabla f\right|^2+R_g\right)\right]\,\dv \\
				&-\int_{\partial M}\left[\frac{\partial v}{\partial \boldsymbol{n}} +\left(v-2h\right)\frac{\partial f}{\partial \boldsymbol{n}}\right]
				e^{-f}\,\d v \\
				&+\int_{\partial M}e^{-f}\nabla_i v_{ij}\eta^j\,\d A
				-\int_{\partial M} \nabla_je^{-f}v_{ij}\eta^i\,\d A .
			\end{split}
		\end{equation}
		%\end{equation}
	\end{lem}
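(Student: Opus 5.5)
The plan is to compute the first variation directly. I will differentiate the integrand and the measure, integrate by parts on $M$, and keep every boundary term instead of discarding it as in the closed case. With $\delta g_{ij}=v_{ij}$, $\delta f=h$ and $v=g^{ij}v_{ij}$, I will use the standard variation formulas:
\begin{equation*}
\delta(\d v)=\tfrac{v}{2}\,\d v,\qquad \delta(e^{-f})=-h e^{-f},\qquad \delta \norm{\na f}^2=-v^{ij}\na_i f\na_j f+2\<\na f,\na h\>,
\end{equation*}
\begin{equation*}
\delta R=-\Delta v+\na_i\na_j v_{ij}-v^{ij}R_{ij}.
\end{equation*}
Summing these gives
\begin{equation*}
\delta\int_M (R+\norm{\na f}^2)e^{-f}\d v=\int_M e^{-f}\Big[-\Delta v+\na_i\na_jv_{ij}-v^{ij}R_{ij}-v^{ij}f_if_j+2\<\na f,\na h\>+(R+\norm{\na f}^2)\big(\tfrac v2-h\big)\Big]\d v .
\end{equation*}

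Next I integrate by parts term by parts, recording each boundary contribution with the outward normal $\boldsymbol{n}$ (written $\eta$ in the statement).

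For $\int_M e^{-f}\na_i\na_j v_{ij}$, one integration gives $\int_{\partial M}e^{-f}\na_iv_{ij}\eta^j$ and an interior term $-\int_M \na_je^{-f}\na_iv_{ij}$. A second integration turns this interior term into $-\int_{\partial M}\na_je^{-f}v_{ij}\eta^i+\int_M \na_i\na_j e^{-f}\,v_{ij}$. Since $\na_i\na_j e^{-f}=e^{-f}(f_if_j-f_{ij})$, the interior part cancels $-v^{ij}f_if_j$ and produces $-v^{ij}\na_i\na_jf$, as required. These are exactly the last two boundary integrals in the lemma.

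For $-\int_M e^{-f}\Delta v$, Green's formula gives $-\int_{\partial M}e^{-f}\partial_{\boldsymbol n}v-\int_M \<\na f,\na v\>e^{-f}$.

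For $2\int_M e^{-f}\<\na f,\na h\>$, integrating by parts gives $2\int_{\partial M}h\,e^{-f}\partial_{\boldsymbol n}f-2\int_M h\,\mathrm{div}(e^{-f}\na f)$.

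The two remaining gradient terms combine as $-\int_M \<\na f,\na(v-2h)\>e^{-f}$. I integrate this once more, using $\mathrm{div}(e^{-f}\na f)=e^{-f}(\Delta f-\norm{\na f}^2)$. This yields the boundary term $-\int_{\partial M}(v-2h)\partial_{\boldsymbol n}f\,e^{-f}$ and the interior term $\int_M (v-2h)(\Delta f-\norm{\na f}^2)e^{-f}$.

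I then collect the interior terms with the coefficient $(\tfrac v2-h)$. The pieces are $(R+\norm{\na f}^2)$ and $2(\Delta f-\norm{\na f}^2)$, which sum to $2\Delta f-\norm{\na f}^2+R$. Together with $-v^{ij}(R_{ij}+\na_i\na_jf)$, this gives the stated formula.

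The main obstacle is sign and bookkeeping. In particular, $h$ appears in both the $\<\na f,\na h\>$ term and the gradient pairing, and I must check that its boundary contributions merge correctly into $(v-2h)\partial_{\boldsymbol n}f$. I will check the first boundary group against Green's identity carefully, and fix the orientation of $\eta$ to agree with $\boldsymbol n$. The measure written $\d v$ on $\partial M$ in the statement should be read as $\d A$.
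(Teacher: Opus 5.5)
Your proof is correct. Every variation formula you use is standard, and each integration by parts has the right boundary sign. The interior pieces $(\tfrac v2-h)(R+\norm{\na f}^2)+(v-2h)(\Delta f-\norm{\na f}^2)$ collapse to $(\tfrac v2-h)(2\Delta f-\norm{\na f}^2+R)$, and the four boundary integrals match the statement.

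The paper gives no proof of its own; it only cites \cite{MR4003012}. Your argument is the direct first-variation computation of the closed case with all boundary terms kept, which is what the cited lemma amounts to. In the write-up, integrate the $\<\na f,\na h\>$ term only once, through the combined pairing $-\int_M\<\na f,\na(v-2h)\>e^{-f}\d v$. As drafted, you also treat it separately beforehand, which would double count it.
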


By A routime compuatation , we can  get
	\begin{lem}\label{lem:7}	Let $(M,g)$ be compact  manifold with boundary. Let $\delta g_{ij}=v_{ij}, \delta f = h, g^{ij}v_{ij}=v,\delta \phi=\theta$, then 
		$$
		\begin{aligned}
			&\delta	\int_M(-\alpha \norm{\na \phi}^2)e^{-f} \d v	\\
					=&\int_M 2\a\theta^\la\left( \Delta\phi^\la-\left\langle \na\phi^\la,\na f \right\rangle \right) e^{-f}\dv+ \int_M-\nu^{ij}\bigg(\a \na_i\phi^\la\nabla_j\phi^\la\bigg)e^{-f}\dv\\
			&+\a\norm{\na \phi}^2(\frac{1}{2}\tr_g\nu-h)e^{-f}	 \dv\\
			&+\int_{\partial M} 2\a \left\langle \na_{\boldsymbol{n}} \phi, \theta \right\rangle e^{-f}	\d A.
		\end{aligned}
		$$
		
	\end{lem}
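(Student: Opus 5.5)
We vary the three ingredients $g$, $\phi$, $f$ of the integrand $-\alpha\norm{\na\phi}^2 e^{-f}\dv$ and then integrate by parts once in the $\phi$-direction. Write $\delta g_{ij}=\nu_{ij}$, $\delta f=h$, $\delta\phi=\theta$, and $\norm{\na\phi}^2=g^{ij}\langle\na_i\phi^\la,\na_j\phi^\la\rangle$ in local coordinates on the target (or an isometric embedding, summing over $\la$). We use three standard first variation formulas:
\begin{equation*}
\delta g^{ij}=-\nu^{ij},\qquad \delta(\dv)=\tfrac12\tr_g\nu\,\dv,\qquad \delta(e^{-f})=-h\,e^{-f}.
\end{equation*}
By the product rule, the variation splits into a metric term, a measure term, and a $\phi$-term.

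\textbf{Step 1 (metric and measure terms).} Differentiating $g^{ij}$ in $-\alpha g^{ij}\na_i\phi^\la\na_j\phi^\la$ gives $+\alpha\nu^{ij}\na_i\phi^\la\na_j\phi^\la$. Differentiating $e^{-f}\dv$ gives $-\alpha\norm{\na\phi}^2\,(\tfrac12\tr_g\nu-h)$. No boundary terms arise here, since nothing is differentiated. I will take care to match signs with the displayed formula: the paper's signs on these two terms look opposite to what this computation gives. I would write the result with the correct sign and note that it agrees with the statement up to this sign convention, namely $\delta g=-\nu$ versus $+\nu$.

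\textbf{Step 2 ($\phi$-term).} The variation in $\phi$ is
\begin{equation*}
-2\alpha\int_M\langle\na\theta^\la,\na\phi^\la\rangle e^{-f}\dv .
\end{equation*}
Integrating by parts with Green's formula on the manifold with boundary, and using $\mathrm{div}(e^{-f}\na\phi^\la)=e^{-f}(\Delta\phi^\la-\langle\na\phi^\la,\na f\rangle)$, this becomes
\begin{equation*}
2\alpha\int_M\theta^\la\bigl(\Delta\phi^\la-\langle\na\phi^\la,\na f\rangle\bigr)e^{-f}\dv-2\alpha\int_{\partial M}\langle\na_{\boldsymbol n}\phi,\theta\rangle e^{-f}\,\d A .
\end{equation*}
The sign of the boundary term depends on whether $\boldsymbol n$ is the inner or the outer normal. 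The statement's $+$ sign corresponds to the inward normal, and I would fix that convention explicitly.

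\textbf{Main obstacle.} The only genuinely delicate point is the bookkeeping: consistent sign conventions for $\nu$ and $\boldsymbol n$, and, if the target is curved, the fact that $\Delta\phi^\la$ should really be the tension field $\tau(\phi)^\la$ once we project onto the tangent space of the target. With an isometric embedding the normal part drops out because $\theta$ is tangent. Once these conventions are fixed, summing Steps 1 and 2 gives the formula of the Lemma.
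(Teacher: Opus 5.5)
Your Steps 1 and 2 are correct, and they follow the same route as the paper's proof. You vary the metric, the measure, the weight and the map in $\int_M\norm{\nabla\phi}^2e^{-f}\,\mathrm{d}v$, then integrate the $\langle\nabla\theta^\lambda,\nabla\phi^\lambda\rangle$ term by parts against $e^{-f}$.

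The step that fails is the last one: the claim that your result agrees with the displayed formula ``up to the convention $\delta g=-\nu$''. Replacing $\nu$ by $-\nu$ does fix the $\nu^{ij}$-term. But it turns your measure term $-\alpha\norm{\nabla\phi}^2(\tfrac12\mathrm{tr}_g\nu-h)$ into $\alpha\norm{\nabla\phi}^2(\tfrac12\mathrm{tr}_g\nu+h)$, not the stated $\alpha\norm{\nabla\phi}^2(\tfrac12\mathrm{tr}_g\nu-h)$. You would have to flip $h$ as well, and the hypotheses fix $\delta g_{ij}=v_{ij}$ and $\delta f=h$ explicitly. A one-line test settles it: take $\nu=0$, $\theta=0$, $h\equiv1$.
\begin{itemize}
\item The left side is $\frac{\mathrm{d}}{\mathrm{d}s}\int_M-\alpha\norm{\nabla\phi}^2e^{-f-s}\,\mathrm{d}v=+\alpha\int_M\norm{\nabla\phi}^2e^{-f}\,\mathrm{d}v$.
\item The right side of the Lemma gives $-\alpha\int_M\norm{\nabla\phi}^2e^{-f}\,\mathrm{d}v$.
\end{itemize}
So no sign convention rescues the statement as printed. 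Its $\nu^{ij}$-term and its $(\tfrac12\mathrm{tr}_g\nu-h)$-term both carry the wrong sign. The displayed identity is false, so your write-up cannot end with ``summing Steps 1 and 2 gives the formula of the Lemma''.

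What your computation actually proves, with $\boldsymbol{n}$ the outward normal, is
\begin{equation*}
\begin{split}
\delta\int_M(-\alpha\norm{\nabla\phi}^2)e^{-f}\,\mathrm{d}v={}&2\alpha\int_M\theta^\lambda\bigl(\Delta\phi^\lambda-\langle\nabla\phi^\lambda,\nabla f\rangle\bigr)e^{-f}\,\mathrm{d}v+\alpha\int_M\nu^{ij}\nabla_i\phi^\lambda\nabla_j\phi^\lambda e^{-f}\,\mathrm{d}v\\
&-\alpha\int_M\norm{\nabla\phi}^2\bigl(\tfrac12\mathrm{tr}_g\nu-h\bigr)e^{-f}\,\mathrm{d}v-2\alpha\int_{\partial M}\langle\nabla_{\boldsymbol{n}}\phi,\theta\rangle e^{-f}\,\mathrm{d}A .
\end{split}
\end{equation*}
You should state this as the corrected lemma and flag the misprint.

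This is also what the paper's own proof yields. It writes $\delta\int_M\norm{\nabla\phi}^2e^{-f}\,\mathrm{d}v$ with the correct signs, and multiplying by $-\alpha$ gives yours. It is also the version the paper actually uses in Theorem \ref{lem:8}, whose integrand contains $-\nu^{ij}(\cdots-\alpha\nabla_i\phi^\lambda\nabla_j\phi^\lambda)$ and $-\alpha\norm{\nabla\phi}^2(\tfrac12\mathrm{tr}_g v-h)$.

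On the boundary term: choosing the inward normal does reproduce the $+$ sign in isolation. However, Theorem \ref{lem:8} combines this lemma with Lemma \ref{lem:6}, whose boundary terms are written with the outward normal. For example, its $-\int_{\partial M}\partial_{\boldsymbol{n}}v\,e^{-f}\,\mathrm{d}A$ arises from $-\int_M\Delta v\,e^{-f}\,\mathrm{d}v$. For consistency the boundary term should therefore carry the minus sign shown above.
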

	
	\begin{proof}
		A routine computation shows that  
\begin{equation}
	\begin{split}
		&\delta	\int_M \norm{\na \phi}^2e^{-f} \d v	\\
		=&\int_M 2g^{ij}\na_i\phi^\la\na_j\theta^\la e^{-f}\dv+ \int_M\bigg(-\nu^{ij}\na_i\phi^\la\nabla_j\phi^\la\bigg)e^{-f}\dv+\norm{\na \phi}^2(\frac{1}{2}\tr_g \nu-h)e^{-f}	 \dv .\\
	\end{split}
\end{equation}
	Integration by parts , the proof of  the Lemma  is complete.	
	\end{proof}

 Lemma \ref{lem:6} and Lemma \ref{lem:7} imply that

	\begin{thm}\label{lem:8}	Let $(M^2,g)$ be surface with boundary, using the above notations, then 
		\begin{equation}\label{999}
			\begin{split}
				&\delta \mathcal{F}(g,\phi,f)\\
				=&\delta \int_M(R-\alpha \norm{\na \phi}^2+\norm{\na f}^2)e^{-f} \d v\\
				=&2\a\int_M \theta^\la\left( \Delta\phi^\la-\left\langle \na\phi^\la,\na f \right\rangle \right) e^{-f}\dv\\
				&+ \int_M-\nu^{ij}\bigg(\Ric_{ij}+\na_i\na_jf-\a \na_i\phi^\la\nabla_j\phi^\la\bigg)e^{-f}-\a\norm{\na \phi}^2(\frac{1}{2}\tr_gv-h)e^{-f}	 \dv\\
				&+\int_{\partial M} 2 \left\langle \na_{\boldsymbol{n}} \phi, \theta \right\rangle e^{-f}+\left(\frac{\tr_g v}{2}-h\right)\left(2\Delta_g f-\left|\nabla f\right|^2+R_g\right) \d A	\\
				& -\int_{\partial M}\left[\frac{\partial v}{\partial \boldsymbol{n}} +\left(\tr_g  v-2h\right)\frac{\partial f}{\partial \boldsymbol{n}}\right]
				e^{-f}\,\d A\\
				&+\int_{\partial M}e^{-f}\nabla_i v_{ij}\eta^j\,\d A
				-\int_{\partial M} \nabla_je^{-f}v_{ij}\eta^i\,\d A .
			\end{split}
		\end{equation}
	\end{thm}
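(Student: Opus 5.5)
The plan is to use linearity of the first variation. Split
\[
\mathcal{F}(g,\phi,f)=\int_M(R+\norm{\na f}^2)e^{-f}\dv+\int_M(-\alpha\norm{\na\phi}^2)e^{-f}\dv .
\]
The first summand does not depend on $\phi$, so its variation in the direction $(v_{ij},h,\theta)$ is exactly the expression of Lemma \ref{lem:6}. The second summand is handled by Lemma \ref{lem:7}, which is stated for an arbitrary compact manifold with boundary and so applies on $M^2$. Summing the two right-hand sides gives the formula, after some bookkeeping.

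\textbf{Step 1: collect the interior terms.} The $-v^{ij}R_{ij}$ and $-v^{ij}\na_i\na_j f$ terms from Lemma \ref{lem:6} combine with the $-v^{ij}\,\alpha\na_i\phi^\lambda\na_j\phi^\lambda$ term from Lemma \ref{lem:7}. This gives the single integrand $-v^{ij}(\Ric_{ij}+\na_i\na_j f-\alpha\na_i\phi^\lambda\na_j\phi^\lambda)e^{-f}$. Note the sign: varying $-\alpha\norm{\na\phi}^2$ in $g^{ij}$ yields $+\alpha v^{ij}\na_i\phi\na_j\phi$, which is why it enters with a minus inside this bracket.

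The factor $(\tfrac v2-h)$ multiplies $2\Delta f-\norm{\na f}^2+R$ from Lemma \ref{lem:6}. It also multiplies the volume-and-weight contribution $(\tfrac12\tr_g v-h)(-\alpha\norm{\na\phi}^2)e^{-f}$ coming from the $\phi$-term. The $\theta$-term $2\alpha\,\theta^\lambda(\Delta\phi^\lambda-\langle\na\phi^\lambda,\na f\rangle)e^{-f}$ comes from integrating $-2\alpha\langle\na\phi^\lambda,\na\theta^\lambda\rangle e^{-f}$ by parts. Its sign should be re-derived, since Lemma \ref{lem:7} is quoted with a possibly inconsistent sign.

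\textbf{Step 2: collect the boundary terms.} The divergence theorem in Step 1 produces the single boundary term $\int_{\partial M}2\alpha\langle\na_{\boldsymbol n}\phi,\theta\rangle e^{-f}\,\d A$. All remaining boundary integrals are copied verbatim from Lemma \ref{lem:6}:
\begin{itemize}
\item the $\partial v/\partial\boldsymbol n$ and $(v-2h)\partial f/\partial\boldsymbol n$ terms,
\item the $\na_i v_{ij}\eta^j$ term,
\item the $\na_j e^{-f}v_{ij}\eta^i$ term.
\end{itemize}
Throughout I write $v=\tr_g v$ and identify $\eta$ with $\boldsymbol n$.

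\textbf{Main obstacle.} The real work is sign and placement consistency between the two quoted lemmas. This covers the sign of the $\alpha\norm{\na\phi}^2$ term multiplying $(\tfrac12\tr_g v-h)$, and which terms are interior rather than boundary. I would first re-derive Lemma \ref{lem:7} directly, using $\delta g^{ij}=-v^{ij}$ and $\delta(e^{-f}\dv)=(\tfrac v2-h)e^{-f}\dv$, to fix the signs. I would then check that the resulting sum matches \eqref{999} term by term, adjusting coefficients (such as the factor $\alpha$ in the boundary $\theta$-term) where the displayed statement is schematic.
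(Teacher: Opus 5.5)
\textbf{Approach.} Your route is the paper's route: the paper's entire proof is the sentence that Lemma~\ref{lem:6} and Lemma~\ref{lem:7} imply \eqref{999}. Your Step~1 is right. That includes the two corrections you make to Lemma~\ref{lem:7} as printed: the metric term must be $+\a v^{ij}\na_i\phi^\la\na_j\phi^\la$, and the weight term must be $-\a\norm{\na\phi}^2(\frac12\tr_g v-h)e^{-f}$. \eqref{999} carries these correct signs, so it does not follow from Lemma~\ref{lem:7} taken literally.

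\textbf{Sign of the boundary $\theta$-term.} Your Step~2 inherits the third sign error of Lemma~\ref{lem:7}. Use the outward normal, which is the convention in which the boundary terms of Lemma~\ref{lem:6} are correct. Then
\[
-2\a\int_M\langle\na\phi^\la,\na\theta^\la\rangle e^{-f}\dv=2\a\int_M\theta^\la\big(\Delta\phi^\la-\langle\na\phi^\la,\na f\rangle\big)e^{-f}\dv-2\a\int_{\partial M}\langle\na_{\boldsymbol{n}}\phi,\theta\rangle e^{-f}\,\d A .
\]
So the boundary $\theta$-term is $-2\a\int_{\partial M}\langle\na_{\boldsymbol{n}}\phi,\theta\rangle e^{-f}\,\d A$. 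It is neither your $+2\a\int_{\partial M}\cdots$ nor the $+2\int_{\partial M}\cdots$ printed in \eqref{999}.

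\textbf{The final matching step fails.} Your last step is to check that the sum matches \eqref{999} term by term, ``adjusting coefficients where the statement is schematic''. This cannot succeed.

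In \eqref{999} the expression $(\frac{\tr_g v}{2}-h)(2\Delta_g f-\norm{\na f}^2+R_g)$ sits inside $\int_{\partial M}\cdots\,\d A$, with no weight $e^{-f}$. In Lemma~\ref{lem:6}, and in your own Step~1, it is the interior term $\int_M(\cdots)e^{-f}\dv$.

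Take $v=0$, $\theta=0$ and $h$ supported in the interior of $M$. The right-hand side of \eqref{999} reduces to $\int_M\a\norm{\na\phi}^2h\,e^{-f}\dv$. The true variation is $-\int_M h\big(2\Delta f-\norm{\na f}^2+R-\a\norm{\na\phi}^2\big)e^{-f}\dv$. These differ in general, so the printed identity is false; this is a misplaced term, not a schematic coefficient.

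What your argument actually proves, and what you should state as its conclusion, is
\[
\begin{split}
\delta\mathcal F={}&\int_M\Big[-v^{ij}\big(\Ric_{ij}+\na_i\na_j f-\a\na_i\phi^\la\na_j\phi^\la\big)\\
&\qquad+\big(\tfrac{\tr_g v}{2}-h\big)\big(2\Delta f-\norm{\na f}^2+R-\a\norm{\na\phi}^2\big)\\
&\qquad+2\a\theta^\la\big(\Delta\phi^\la-\langle\na\phi^\la,\na f\rangle\big)\Big]e^{-f}\dv\\
&-2\a\int_{\partial M}\langle\na_{\boldsymbol{n}}\phi,\theta\rangle e^{-f}\,\d A-\int_{\partial M}\Big[\frac{\partial\,\tr_g v}{\partial\boldsymbol{n}}+(\tr_g v-2h)\frac{\partial f}{\partial\boldsymbol{n}}\Big]e^{-f}\,\d A\\
&+\int_{\partial M}e^{-f}\na_iv_{ij}\eta^j\,\d A-\int_{\partial M}(\na_je^{-f})\,v_{ij}\eta^i\,\d A .
\end{split}
\]
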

We now proceed to simplify  the  formula  given by \eqref{999}. Let us  consider the evolution equations  given by
	\begin{equation}\label{q-1}
		\left\{
		\begin{array}{l}
			\frac{\partial}{\partial t}g_{ij} =-2\left( R_{ij}+\na_i\na_j f-\a \na_i\phi^\la\nabla_j\phi^\la \right)\quad\mbox{in}\quad M\times\left(0,T\right)\\
			k_g\left(\cdot,t\right)=\psi\left(\cdot\right) \quad\mbox{on}\quad \partial M\times\left(0,T\right)\\
			\frac{\partial f}{\partial t}=-\Delta_g f+\left|\nabla f\right|^2 -R_g+\a \norm{\na \phi}^2 \quad\mbox{in}\quad M\times\left(0,T\right)\\
			\frac{\partial}{\partial \boldsymbol{n}}f=0\quad\mbox{on}\quad \partial M\times\left(0,T\right)\\
			\frac{\partial   \phi}{ \partial t  }=\Delta\phi^\la-\left\langle \na\phi^\la,\na f \right\rangle,
		\end{array}
		\right.
	\end{equation}
	on  surface $M$ with boundary $\partial M$.
	
	  By the diffeomorphism generated by the vector field $\na f$, we find the systems \eqref{q-1} is equivalent to the following systems:	
		\begin{equation}\label{q-3}
		\left\{
		\begin{array}{l}
			\frac{\partial}{\partial t}g_{ij} =-2\left( R_{ij}-\a \na_i\phi^\la\nabla_j\phi^\la \right)\quad\mbox{in}\quad M\times\left(0,T\right)\\
			k_g\left(\cdot,t\right)=\psi\left(\cdot\right) \quad\mbox{on}\quad \partial M\times\left(0,T\right)\\
			\frac{\partial f}{\partial t}=-\Delta_g f -R_g+\a \norm{\na \phi}^2 \quad\mbox{in}\quad M\times\left(0,T\right)\\
			\frac{\partial f}{\partial \boldsymbol{n}}=0\quad\mbox{on}\quad \partial M\times\left(0,T\right)\\
			\frac{\partial   \phi}{ \partial t  }=\tau(\phi).
		\end{array}
		\right.
	\end{equation}

	By Lemma \ref{lem:8}, we can show that 
	\begin{thm}		\label{monotonicity1}
		Under  the flow (\ref{q-3}), we have   
\begin{equation*}
	\begin{split}
			&\frac{d}{dt}	\mathcal{F}(g,\phi,f)\\
		=& 2\int_M \left\|R_{ij}+\nabla_i\nabla_j f-\a \na_i\phi\otimes \nabla_j \phi \right\|^2e^{-f}\,\dv +2\a \int_M \norm{\tau(\phi)-\left\langle  \na \phi, \na f \right\rangle}^2e^{-f}\dv \\
		&+\int_{\partial M}\left(k_g R_g-2k_g'\right)e^{-f}\,\d A
		+2\int_{\partial M}k_g\left\|\nabla^{\top}f\right\|^2e^{-f}\,\d A,\\
		&+\int_{\partial M} 2 \left\langle \na_{\boldsymbol{n}}\phi, \theta \right\rangle e^{-f}	\d A,
	\end{split}
\end{equation*}
where $\nabla^{\top} f$ denotes the component of  $\nabla f$
		tangent to $\partial M$, $k_g^\prime$ denotes the derivatives of $k_g$ with respect to the time $t.$
	\end{thm}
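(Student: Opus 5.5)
The plan is to apply the first variation formula of Theorem \ref{lem:8} (i.e.\ Lemmas \ref{lem:6} and \ref{lem:7}) along the flow. I will work first with the gauge-fixed system \eqref{q-1}, where the variations are
\[
v_{ij}=-2\left(R_{ij}+\na_i\na_j f-\a\na_i\phi^\la\na_j\phi^\la\right),\qquad \theta^\la=\Delta\phi^\la-\langle\na\phi^\la,\na f\rangle .
\]
The time derivative $h=\partial_t f$ is taken from \eqref{q-1}. In this gauge $h$ is chosen so that $\frac{\tr_g v}{2}-h=0$, which means the weighted measure $e^{-f}\dv$ is preserved.

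\textbf{Interior terms.} The coefficient $\frac{\tr_g v}{2}-h$ multiplies both interior terms $\left(2\Delta f-|\na f|^2+R\right)$ and $\a|\na\phi|^2$, so those contributions vanish. Inserting $v_{ij}$ into $-v^{ij}\left(R_{ij}+\na_i\na_j f-\a\na_i\phi\na_j\phi\right)$ gives
\[
2\int_M\left\|R_{ij}+\na_i\na_j f-\a\na_i\phi\na_j\phi\right\|^2e^{-f}\dv .
\]
The $\theta$-term gives $2\a\int_M\|\tau(\phi)-\langle\na\phi,\na f\rangle\|^2e^{-f}\dv$. Since $\mathcal{F}$ is diffeomorphism invariant, the same derivative holds for \eqref{q-3}. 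The boundary condition $f_{\boldsymbol n}=0$ is compatible with the pulled-back diffeomorphism because $\na f$ is tangent to $\partial M$.

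\textbf{Boundary terms.} I will use $f_{\boldsymbol n}=0$, so that $\na f=\na^\top f$ on $\partial M$. The term $(\tr_g v-2h)\frac{\partial f}{\partial\boldsymbol n}$ then drops out, and the remaining terms are $-\int_{\partial M}\frac{\partial v}{\partial\boldsymbol n}e^{-f}$, $\int_{\partial M}e^{-f}\na_iv_{ij}\eta^j$ and $-\int_{\partial M}\na_je^{-f}v_{ij}\eta^i$. On a surface $R_{ij}=\frac R2 g_{ij}$, so $v_{ij}=-Rg_{ij}-2\na_i\na_jf+2\a\na_i\phi\na_j\phi$. I will treat the pieces as follows.
\begin{itemize}
\item \emph{Curvature pieces.} These are handled exactly as in the Ricci flow case of \cite{MR4003012}. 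The variation of the geodesic curvature under $\partial_t g=v$ is expressed by $k_g'$ together with $\frac{\partial R}{\partial\boldsymbol n}$. Combined with the Gauss--Bonnet structure, this produces $\int_{\partial M}(k_gR_g-2k_g')e^{-f}\d A$.
\item \emph{Hessian pieces.} Using $\na^2f(\boldsymbol n,X)=-\Pi(\na^\top f,X)=-k_g\langle\na^\top f,X\rangle$ for tangent $X$, which follows from differentiating $f_{\boldsymbol n}=0$ along $\partial M$, these produce $2\int_{\partial M}k_g\|\na^\top f\|^2e^{-f}\d A$.
\item \emph{$\phi$-pieces.} The terms $\a\na_i\phi\na_j\phi$ contribute through $\na_{\boldsymbol n}\phi$. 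Together with the boundary term $\int_{\partial M}2\langle\na_{\boldsymbol n}\phi,\theta\rangle e^{-f}\d A$ coming from Lemma \ref{lem:7}, they give the last line of the statement. Any further $\na_{\boldsymbol n}\phi$ contributions I would group into that term, or check that they cancel.
\end{itemize}

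\textbf{Main obstacle.} I expect the boundary bookkeeping to be the hardest step, in particular the $\phi$-dependent contributions to $\na_iv_{ij}\eta^j$ and to $\partial_{\boldsymbol n}v$. These involve $\na\na\phi\ast\na\phi$ on $\partial M$, and they do not obviously simplify without a Neumann condition on $\phi$. My first attempt will be to use the contracted Bianchi-type identity $\na^i(\a\na_i\phi\na_j\phi)=\a\tau(\phi)\na_j\phi+\frac{\a}{2}\na_j|\na\phi|^2$ to pair these with the $\frac{\partial}{\partial\boldsymbol n}$ of the trace term. I will then collect whatever survives into the $\langle\na_{\boldsymbol n}\phi,\theta\rangle$ boundary integral, following the computation in \cite{MR4003012} line by line.
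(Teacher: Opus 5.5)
\textbf{Gap: the boundary step.} Your route is the paper's: insert the gauge-fixed variations into the first-variation formula of Theorem~\ref{lem:8}, kill the trace terms, read off the two squares, and import the boundary computation of \cite{MR4003012}. The paper's proof also stops there (``the remained proof is omitted''), so the gap is the same in both. Two smaller points first. With $h$ taken literally from \eqref{q-1} you get $\frac12\mathrm{tr}_g v-h=-|\nabla f|^2\neq 0$. The measure-preserving choice is $h=-\Delta f-R+\alpha|\nabla\phi|^2$, which is the $f$-equation of \eqref{q-3}; the two systems have their $f$-equations interchanged. Also, integrating $-2\alpha\int_M\langle\nabla\phi,\nabla\theta\rangle e^{-f}$ by parts gives the boundary term $-2\alpha\int_{\partial M}\langle\nabla_{\boldsymbol n}\phi,\theta\rangle e^{-f}\,\mathrm{d}A$, not $+2$. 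The real problem is that your plan to ``group or cancel'' the $\phi$-terms cannot work.

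First, the curvature pieces are not handled ``exactly as in the Ricci flow case.'' The identity $\partial_{\boldsymbol n}R=k_gR-2k_g'$ used in \cite{MR4003012} comes from the conformal evolution $\partial_t g=-Rg$. For $\partial_t g=-Rg+2\alpha\,d\phi\otimes d\phi$, take outward $\boldsymbol n$, unit tangent $T$, and $k_g>0$ on the round disc. The first variation of the geodesic curvature is then
\[
k_g'=\tfrac12\bigl(k_gR-\partial_{\boldsymbol n}R\bigr)+\alpha k_g|\phi_{\boldsymbol n}|^2-2\alpha k_g|\nabla^{\top}\phi|^2-2\alpha\langle\nabla^2\phi(T,T),\phi_{\boldsymbol n}\rangle .
\]
Second, the part $2\alpha\,d\phi\otimes d\phi$ of $v_{ij}$ contributes to the three boundary integrals of Lemma~\ref{lem:6} (with $f_{\boldsymbol n}=0$) the integrand
\[
\bigl(-\alpha\,\partial_{\boldsymbol n}|\nabla\phi|^2+2\alpha\langle\phi_{\boldsymbol n},\Delta\phi+d\phi(\nabla f)\rangle\bigr)e^{-f}.
\]
Here $\partial_{\boldsymbol n}|\nabla\phi|^2=2\nabla^2\phi(\boldsymbol n,\nabla\phi)$ is not proportional to $\phi_{\boldsymbol n}$, so it cannot be absorbed into a $\langle\nabla_{\boldsymbol n}\phi,\theta\rangle$ term.

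Now test the case $\phi_{\boldsymbol n}=0$, where the claimed extra term vanishes. Differentiating $\phi_{\boldsymbol n}=0$ along $\partial M$ gives $\nabla^2\phi(\boldsymbol n,X)=-k_g\langle\nabla^\top\phi,X\rangle$, exactly as you did for $f$. Collecting everything, the boundary integrand becomes
\[
\bigl(k_gR-2k_g'+2k_g|\nabla^\top f|^2-2\alpha k_g|\nabla^\top\phi|^2\bigr)e^{-f}.
\]
So the $\phi$-contributions neither cancel nor reduce to $\langle\nabla_{\boldsymbol n}\phi,\theta\rangle$. The identity you are aiming at is missing at least $-2\alpha\int_{\partial M}k_g|\nabla^\top\phi|^2e^{-f}\,\mathrm{d}A$, plus further $\phi_{\boldsymbol n}$-terms when $\phi$ has no Neumann condition, as in \eqref{q-3}. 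To close the argument you must carry out this boundary computation, and it produces a corrected formula rather than the stated one.
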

	
	\begin{proof} We can carry over verbatim  the proof of \cite{MR4003012}  to our set-up. 
	It suffices to note that in our case  the variations  is  given by
		\[
		v_{ij}=\delta g_{ij}=-2\left(R_{ij}-\a\norm{\na \phi}^2+\nabla_i\nabla_j f\right), \quad h=\delta f=-\Delta_g f -R.
		\]	
		The remained proof is omitted. 	
		\end{proof}	
	Obviously,  Theorem \ref{monotonicity1} implies the following result
	\begin{thm}\label{thm-4}
The functional $\mathcal{F}(g,\phi,f)$ given in \eqref{FV}  is non-decreasing  along the flow \eqref{q-1} on the interval $(0,T)$.	\end{thm}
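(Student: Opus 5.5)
The plan is to read off Theorem \ref{thm-4} from the monotonicity formula of Theorem \ref{monotonicity1}, after first checking that $\mathcal{F}$ takes the same values along \eqref{q-1} and along \eqref{q-3}. Then I will check that each term on the right-hand side of that formula is nonnegative.

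\textbf{Step 1: the two flows give the same $\mathcal{F}$.} The systems \eqref{q-1} and \eqref{q-3} differ by pulling back along the one-parameter family of diffeomorphisms generated by $\nabla f$. The integrand $(R-\alpha\norm{\na\phi}^2+\norm{\na f}^2)e^{-f}\,\dv$ is natural under diffeomorphisms. So $\mathcal{F}(g(t),\phi(t),f(t))$ takes the same value along either system. The one point to verify is that the diffeomorphisms preserve $M$, i.e.\ map $\partial M$ to itself. This holds because $\frac{\partial f}{\partial\boldsymbol{n}}=0$ makes $\nabla f$ tangent to $\partial M$. Hence it suffices to prove $\frac{d}{dt}\mathcal{F}\geq 0$ along \eqref{q-3}.

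\textbf{Step 2: the interior terms.} Theorem \ref{monotonicity1} gives the derivative explicitly. Its two interior integrals,
\[
2\int_M\norm{R_{ij}+\nabla_i\nabla_jf-\alpha\nabla_i\phi\otimes\nabla_j\phi}^2e^{-f}\,\dv
\quad\text{and}\quad
2\alpha\int_M\norm{\tau(\phi)-\langle\nabla\phi,\nabla f\rangle}^2e^{-f}\,\dv,
\]
are manifestly nonnegative, since $\alpha>0$.

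\textbf{Step 3: the boundary terms (main obstacle).} This is where the argument is most delicate. There are three boundary contributions.
\begin{enumerate}
\item For $\int_{\partial M}2\langle\nabla_{\boldsymbol{n}}\phi,\theta\rangle e^{-f}\,\d A$, I would impose the Neumann condition $\frac{\partial\phi}{\partial\boldsymbol{n}}=0$, as in Theorem \ref{thm-1}, so the term vanishes identically.
\item For $\int_{\partial M}(k_gR_g-2k_g')e^{-f}\,\d A$, the boundary condition $k_g(\cdot,t)=\psi(\cdot)$ is time-independent, so $k_g'=0$. The term reduces to $\int_{\partial M}k_gR_ge^{-f}\,\d A$. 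I would then use $k_g=\psi\geq 0$, i.e.\ a convex boundary, as in the convexity hypothesis of Theorem \ref{thm-1}, together with $R_g\geq 0$ on $\partial M$.
\item The remaining term $2\int_{\partial M}k_g\norm{\nabla^{\top}f}^2e^{-f}\,\d A$ is then nonnegative whenever $k_g=\psi\geq 0$.
\end{enumerate}
The sign of $R_g$ on $\partial M$ is the genuinely nontrivial input. To control it I would invoke the standing positivity assumption $R(g_0)>0$, propagated by a maximum-principle argument for the surface evolution of $R$ under the Neumann-type boundary condition, in the spirit of the proof of Theorem \ref{thm-1}. If this propagation fails, I would state these sign conditions on $\psi$ and $R$ explicitly as hypotheses implicit in Theorem \ref{thm-4}.

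\textbf{Conclusion.} Combining Steps 2 and 3 gives $\frac{d}{dt}\mathcal{F}\geq 0$ on $(0,T)$ along \eqref{q-3}. By Step 1, $\mathcal{F}$ is therefore non-decreasing along \eqref{q-1}.
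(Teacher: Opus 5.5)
Your route is the paper's. Its whole proof of Theorem~\ref{thm-4} is the remark that Theorem~\ref{monotonicity1} implies it, and your Steps~1--2 make that reading explicit. You are also right that Step~3 is where the content lies. Under what \eqref{q-1} actually imposes (no boundary condition on $\phi$, no sign on $\psi$, no sign on $R$), none of the three boundary integrals in Theorem~\ref{monotonicity1} has a sign, and the paper passes over this silently. The genuine gap in your proposal is the argument you give for $R_g\geq 0$ on $\partial M$.

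First, $R(g_0)>0$ is not a standing assumption; it is a hypothesis of Theorem~\ref{thm-1} only. Second, the maximum-principle propagation you sketch fails for this flow. Along $\partial_t g=-2\Ric+2\alpha\,\nabla\phi\otimes\nabla\phi$ on a surface, $R$ does not satisfy $\partial_t R=\Delta R+R^2$; already when $\tau(\phi)=0$ one finds
\[
\partial_t R=\Delta R+R^2-\alpha R\norm{\nabla\phi}^2-\alpha\Delta\norm{\nabla\phi}^2 .
\]
The term $-\alpha\Delta\norm{\nabla\phi}^2$ has no sign at a point where $R$ first reaches $0$. What does propagate is positivity of $S=R-\alpha\norm{\nabla\phi}^2$. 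The evolution equation recalled from \cite{MR2961788}, together with $\norm{\operatorname{Sc}}^2\geq\frac12 S^2$ in dimension two, gives $\partial_t S\geq\Delta S+S^2$, and then $R\geq S$. Even this needs $S(g_0)>0$ and a boundary condition on $\partial_{\boldsymbol n}S$ compatible with the Hopf lemma, and \eqref{q-1} provides neither.

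So your fallback is not optional. What your argument proves is Theorem~\ref{thm-4} under the added hypotheses $\partial_{\boldsymbol n}\phi=0$, $\psi\geq 0$ and $R_g\geq 0$ on $\partial M$, and these must be stated.

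A smaller point concerns $k_g'=0$. The diffeomorphisms of your Step~1 slide points along $\partial M$. So if $k_g=\psi(\cdot)$ holds along \eqref{q-1}, then along the pulled-back flow the geodesic curvature at a fixed point is $\psi$ evaluated at a moving point. Hence $k_g'=0$ in the gauge of Theorem~\ref{monotonicity1} fails unless $\psi$ is constant. Otherwise you should replace it by the assumption $k_g'\leq 0$, as the paper does before Theorem~\ref{monotonicity2}.
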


	\section{Perelman type $\mathcal{W}$-entropy}

	Next, we consider Perelman's $\mathcal{W}$-functional on surface $M$ with boundary $\partial  M $, which is defined as 
	\[
	\mathcal{W}_{\text{Perelman}}\left(g,f,\tau\right)=	\frac{1}{4\pi \tau}\int_{M}\left[\tau\left(\left|\nabla f\right|^2+R_g\right)+f-2\right]
e^{-f}\,dA_g.
	\]

Recall that  $k_g^\prime$ denotes the derivatives of $k_g$ with respect to the time $t$. Provided that  $k_g\geq 0$ and $k_g'=\psi'\leq 0$ , we recall the following  result in the literature
	\begin{thm}[cf. \cite{MR4003012}]\label{monotonicity2}If (M,g) is Ricci flow on surface with boundary, then	
		\begin{eqnarray*}
			\frac{d}{dt}\mathcal{W}_{\text{Perelman}}\left(g,f,\tau\right)&=&\int_M 2\tau\left|R_{ij}+\nabla_i\nabla_j f-\frac{1}{2\tau}g_{ij}\right|^2\left(4\pi\tau\right)^{-1}
			e^{-f}\,\dv\\
			&&+\frac{1}{4\pi}
			\left(\int_{\partial M}\left(k_gR_g-2k_g'+2k_g\left|\nabla^{\top}f\right|^2\right)e^{-f}\,\d A_g\right).
		\end{eqnarray*}
	\end{thm}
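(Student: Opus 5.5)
The statement is the surface-with-boundary version of Perelman's $\mathcal{W}$-monotonicity, and I would prove it from the first-variation formula of Lemma \ref{lem:6} (with $\phi$ absent, i.e.\ $\alpha=0$). The setup is the coupled system
\[
\partial_t g_{ij}=-2(R_{ij}+\nabla_i\nabla_j f),\qquad \partial_t f=-\Delta f+|\nabla f|^2-R+\tfrac{1}{\tau},\qquad \tau'=-1,
\]
with $k_g=\psi$ and $\partial f/\partial\boldsymbol{n}=0$ on $\partial M$. This is gauge-equivalent, via the diffeomorphisms generated by $\nabla f$, to Ricci flow coupled with the conjugate heat equation. Since $\mathcal{W}_{\text{Perelman}}$ is diffeomorphism invariant, it suffices to compute along this gauged system. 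On a surface $R_{ij}=\frac{R}{2}g_{ij}$, which simplifies the boundary curvature terms.

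\textbf{Step 1: interior terms.} I will write $\mathcal{W}_{\text{Perelman}}=\tau\,\mathcal{F}\cdot(4\pi\tau)^{-1}+\int_M (f-2)(4\pi\tau)^{-1}e^{-f}\,\dv$, with the $f$-dependence of $\mathcal{F}$ taken as above. I then apply Lemma \ref{lem:6} with $v_{ij}=-2(R_{ij}+\nabla_i\nabla_j f)$ and $h$ given by the evolution of $f$. The choice of $h$ makes $\frac v2-h-\frac{1}{\tau}=0$, i.e.\ the measure $(4\pi\tau)^{-1}e^{-f}\dv$ is preserved. As in the closed case, I will add the derivative of the $(f-2)$ term and the $\tau'$ contributions, and complete the square. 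This yields the interior integrand $2\tau|R_{ij}+\nabla_i\nabla_j f-\frac{1}{2\tau}g_{ij}|^2(4\pi\tau)^{-1}e^{-f}$, up to divergence terms produced when $\int \Delta f\, e^{-f}$-type expressions are integrated by parts.

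\textbf{Step 2: boundary terms.} I expect this step to be the main obstacle. Every divergence produced in Step 1, together with the four boundary integrals in Lemma \ref{lem:6}, has to be collected, using the following facts.
\begin{itemize}
\item The terms $\partial_{\boldsymbol{n}}f=0$ drop out.
\item With $v=-2(R+\Delta f)$, the term $\partial v/\partial\boldsymbol{n}$ becomes $-2\partial_{\boldsymbol{n}}R-2\partial_{\boldsymbol{n}}\Delta f$.
\item The divergence $\nabla_i v_{ij}\eta^j$ is handled by the contracted Bianchi identity, which in 2D gives $\nabla_i R_{ij}=\frac12\nabla_j R$, together with the commutation $\nabla_i\nabla_i\nabla_j f=\nabla_j\Delta f+R_{jk}\nabla_k f$.
\item Since $\partial_{\boldsymbol{n}}f=0$ on $\partial M$, differentiating tangentially gives $\nabla^2 f(\boldsymbol{n},\boldsymbol{n})=\Delta f-\Delta_{\partial M}f-k_g\partial_{\boldsymbol{n}}f$ and $\nabla^2 f(\boldsymbol{n},e)=k_g\langle \nabla^\top f,e\rangle$ for tangential $e$. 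This produces the term $2k_g|\nabla^\top f|^2$.
\item The time derivative of the geodesic curvature under $\partial_t g=-Rg$ (conformal on a surface) gives $\partial_{\boldsymbol{n}}R=-2k_g'+k_gR$ up to the diffeomorphism gauge. This is how $k_gR_g-2k_g'$ arises.
\end{itemize}
These are exactly the identities used in \cite{MR4003012} for the $\mathcal{F}$-functional, so I expect the boundary part of $\frac{d}{dt}(\tau\mathcal{F})$ to equal $\frac{1}{4\pi}\int_{\partial M}(k_gR-2k_g'+2k_g|\nabla^\top f|^2)e^{-f}\,\d A$. I also expect the extra $(f-2)$ and $1/\tau$ pieces to contribute no boundary terms, because they carry only $\partial_{\boldsymbol{n}}f=0$ boundary integrands.

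\textbf{Step 3: assembly.} I will add the two parts and check the constant: the factor $\tau$ multiplying $\mathcal{F}$ is cancelled by the $(4\pi\tau)^{-1}$ normalization, leaving the prefactor $\frac{1}{4\pi}$ on the boundary term. Under $k_g\ge0$ and $k_g'\le0$ every term is nonnegative, giving monotonicity. The delicate point is tracking signs in the boundary integrals of Lemma \ref{lem:6}, particularly the term $\nabla_j e^{-f}v_{ij}\eta^i$. After using $\partial_{\boldsymbol{n}}f=0$, this term reduces to a tangential contraction $v(\nabla^\top f,\boldsymbol{n})$, which equals $-2k_g|\nabla^\top f|^2$ by the identity in Step 2.
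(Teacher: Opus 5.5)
The paper gives no argument for this statement; it is quoted from \cite{MR4003012}. Your outline (first variation via Lemma~\ref{lem:6}, gauge-fixing along $\nabla f$, the two-dimensional Bianchi identity, and $\partial_{\boldsymbol n}R=k_gR-2k_g'$ from the conformal evolution of $k_g$) is the standard route to it.

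As written, however, Step~1 fails, because you pair the gauge-fixed metric evolution with the wrong equation for $f$. The same swap occurs in the paper's systems \eqref{p-1}/\eqref{p-2}, which you appear to have followed. Since $\tau'=-1$ gives $\partial_t(4\pi\tau)^{-1}=(4\pi\tau)^{-1}\tau^{-1}$, the density of $(4\pi\tau)^{-1}e^{-f}\dv$ changes at the rate $\frac{v}{2}-h+\frac{1}{\tau}$, not $\frac{v}{2}-h-\frac{1}{\tau}$. With $v=-2(R+\Delta f)$ and your $h=-\Delta f+|\nabla f|^2-R+\frac{1}{\tau}$, this rate equals $-|\nabla f|^2\neq 0$. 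So the measure is not preserved, and the term $(\frac{v}{2}-h)(2\Delta f-|\nabla f|^2+R)$ of Lemma~\ref{lem:6} does not combine with the $(f-2)$ and $\tau'$ contributions into the square.

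The correct partner of $\partial_t g=-2(\Ric+\nabla^2 f)$ is $\partial_t f=-\Delta f-R+\frac{1}{\tau}$; the equation with $+|\nabla f|^2$ belongs to the ungauged flow $\partial_t g=-2\Ric$. With the corrected $h$ one has $\frac{v}{2}-h=-\frac{1}{\tau}$. The interior integrand then becomes $2\tau|R_{ij}+\nabla_i\nabla_j f-\frac{1}{2\tau}g_{ij}|^2(4\pi\tau)^{-1}e^{-f}$ plus $(4\pi\tau)^{-1}\Delta e^{-f}$. The integral of the second piece is $-(4\pi\tau)^{-1}\int_{\partial M}\partial_{\boldsymbol n}f\,e^{-f}\,\d A=0$, as you anticipated.

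There is also a sign error in Step~2 that would flip the $k_g|\nabla^\top f|^2$ term. Lemma~\ref{lem:6} comes from the divergence theorem, so $\boldsymbol n=\eta$ is the outward normal. The identity $\partial_{\boldsymbol n}R=k_gR-2k_g'$ holds with $k_g>0$ on convex boundaries; it follows from $k_g=e^{-u}(k_{0}+\partial_{\boldsymbol n_0}u)$ for $g=e^{2u}g_0$ and $u_t=-R/2$.

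In that convention, differentiating $\langle\nabla f,\boldsymbol n\rangle=0$ along a unit tangent $e$ gives $\nabla^2 f(e,\boldsymbol n)=-\langle\nabla f,\nabla_e\boldsymbol n\rangle=-k_g\langle\nabla^\top f,e\rangle$, not $+k_g\langle\nabla^\top f,e\rangle$. Hence $v(\boldsymbol n,\nabla^\top f)=+2k_g|\nabla^\top f|^2$. The last boundary integral of Lemma~\ref{lem:6}, $-\int_{\partial M}\nabla_j e^{-f}v_{ij}\eta^i\,\d A=\int_{\partial M}v(\boldsymbol n,\nabla f)e^{-f}\,\d A$, therefore contributes $+2\int_{\partial M}k_g|\nabla^\top f|^2e^{-f}\,\d A$. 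With your sign you would get the opposite of the stated formula.

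The remaining boundary terms reduce to $-\partial_{\boldsymbol n}v+\nabla_i v_{ij}\eta^j=\partial_{\boldsymbol n}R$. The $\partial_{\boldsymbol n}\Delta f$ terms cancel, so your identity for $\nabla^2 f(\boldsymbol n,\boldsymbol n)$ is not needed, and $h$ never reaches the boundary because $\partial_{\boldsymbol n}f=0$.

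Finally, in your monotonicity remark, $k_g\geq 0$ and $k_g'\leq 0$ are not enough: the term $k_gR$ also requires $R\geq 0$ on $\partial M$. This does not affect the identity itself.
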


%	Let $\delta g_{ij}=v_{ij}, \delta f = h, g^{ij}v_{ij}=v,\delta \phi=\theta.$ 
	Now we consider the functional	
	\begin{equation}\label{WRH}
		\begin{split}
		\mathcal{W}_{RH}\left(g,f,\phi,\tau\right)=&\int_{M}\left[\tau\left(\left|\nabla f\right|^2+R_g\right)+f-2\right]
			\left(4\pi \tau\right)^{-1}e^{-f}\,\dv \\
			& -\int_M \tau\alpha \norm{\na \phi}^2(4\pi\tau)^{-1} e^{-f}\dv.
		\end{split}
	\end{equation}

We can get

	\begin{thm}\label{thm:102} Let $(M,g)$ be  surface with boundary, denote $\delta g_{ij}=v_{ij}, \delta f = h, g^{ij}v_{ij}=v,\delta \phi=\theta, \delta \tau=\sigma$, then
		\begin{equation}\label{899}
			\begin{split}
				&\delta \mathcal{W}_{RH}\left(g,f,\phi,\tau\right)\\
				=&\int_M (-\tau \nu^{ij}+\sigma g_{ij})\bigg(\Ric_{ij}+\na_i\na_jf-\a \na_i\phi^\la\nabla_j\phi^\la-\frac{1}{2\tau}g_{ij}\bigg)\frac{1}{4\pi\tau}e^{-f}\dv\\
				&+ \int_M \tau \left(\frac{\tr_g v}{2}-h-\frac{\sigma}{\tau}\right)\left(2\Delta_g f-\left|\nabla f\right|^2+R_g-\a \norm{\na \phi}^2+\frac{f-3}{\tau}\right) \dv\\
				&+(4\pi\tau)^{-1} \bigg(\int_M 2\tau\a\theta^\la\left( \Delta\phi^\la-\left\langle \na\phi^\la,\na f \right\rangle \right) e^{-f}\dv+\mathcal{A},
			\end{split}
		\end{equation}
	where
		$$
	\begin{aligned}
	\mathcal{A}=& (4\pi)^{-1}  \bigg(\int_{\partial M} 2\a \left\langle \na_{\boldsymbol{n}} \phi, \theta \right\rangle e^{-f} \, \d A	\\
	&-\int_{\partial M}\left[\frac{\partial v}{\partial \boldsymbol{n}} +\left(\tr_g  v-2h\right)\frac{\partial f}{\partial \boldsymbol{n}}\right]
	e^{-f}\,\, \d A\\
	&+\int_{\partial M}e^{-f}(\nabla_i v_{ij})\eta^j\, \d A
	-\int_{\partial M} (\nabla_je^{-f})v_{ij}\eta^i\, \d A\bigg) \\
	+&(4\pi\tau)^{-1}\int_{\partial M}\left( \na_n e^{-f}\right) 	 \d A	.
	\end{aligned}
	$$	
	\end{thm}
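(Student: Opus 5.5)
The computation reduces to results already in place. Write $\mathcal{W}_{RH}=(4\pi\tau)^{-1}\bigl(\tau\,\mathcal{F}(g,\phi,f)+\int_M (f-2)e^{-f}\,\dv\bigr)$, with $\mathcal{F}$ the functional in \eqref{FV}. We then vary the three pieces separately: the prefactor $(4\pi\tau)^{-1}$, the product $\tau\mathcal{F}$, and the zeroth-order term $\int_M(f-2)e^{-f}\dv$.

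\emph{Step 1.} The variation of $\mathcal{F}$ is given by Theorem \ref{lem:8}, i.e. Lemma \ref{lem:6} combined with Lemma \ref{lem:7}. Multiplying by $\tau$ produces the interior terms $-\tau v^{ij}(\Ric_{ij}+\na_i\na_j f-\a\na_i\phi^\la\na_j\phi^\la)e^{-f}$, the term $\tau(\tfrac{v}{2}-h)(2\Delta f-\norm{\na f}^2+R-\a\norm{\na\phi}^2)e^{-f}$, and the $\theta$-term $2\tau\a\theta^\la(\Delta\phi^\la-\<\na\phi^\la,\na f\>)e^{-f}$. It also produces the boundary terms, which, multiplied by $(4\pi\tau)^{-1}\tau=(4\pi)^{-1}$, give exactly the first group in $\mathcal{A}$.

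\emph{Step 2.} The $\sigma$-terms come from differentiating $\tau$ in $\tau\mathcal{F}$ and in the prefactor. This gives $\sigma\mathcal{F}\,(4\pi\tau)^{-1}-\frac{\sigma}{\tau}\mathcal{W}_{RH}$.

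\emph{Step 3.} Vary $\int_M (f-2)e^{-f}\dv$. Since $\delta(e^{-f}\dv)=(\tfrac v2-h)e^{-f}\dv$, this gives $\int_M\bigl(h+(f-2)(\tfrac v2-h)\bigr)e^{-f}\dv$. The stray $h$ is rewritten as $(\tfrac v2-h)\cdot(-1)+\tfrac v2$, which is how the shift from $f-2$ to $f-3$ in the statement appears.

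\emph{Step 4.} Reorganize. The leftover pieces are $\tfrac v2 e^{-f}$, the $\sigma$-terms, and the trace parts. They are absorbed by inserting $-\frac{1}{2\tau}g_{ij}$ into the tensor contracted against $v^{ij}$; this uses $v^{ij}g_{ij}=v$ and, on a surface, $g^{ij}g_{ij}=2$. One then adds and subtracts $\sigma g_{ij}$ contractions so that the $\sigma$ contributions combine into $(-\tau v^{ij}+\sigma g_{ij})(\cdots)$ and a $-\frac{\sigma}{\tau}$ shift inside $(\tfrac v2-h-\tfrac\sigma\tau)$. This is the standard bookkeeping in Perelman's computation, and Theorem \ref{monotonicity2} provides the Ricci-flow template. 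The $\phi$-terms ride along, because they enter only through $\Ric-\a\na\phi\otimes\na\phi$ and $R-\a\norm{\na\phi}^2$.

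\emph{Main obstacle.} The main obstacle is the boundary contribution from the term $\int_M \Delta f\,e^{-f}$-type pieces. When trace terms are converted, for instance via $\int_M(\Delta f-\norm{\na f}^2)e^{-f}=\int_{\partial M}\na_{\boldsymbol n}e^{-f}\,\d A$ up to sign, an extra boundary integral appears. I expect this to be the term $(4\pi\tau)^{-1}\int_{\partial M}\na_{\boldsymbol n}e^{-f}\d A$ in $\mathcal{A}$, and I would track it carefully with Stokes' theorem rather than dropping it as in the closed case. All constant factors (the $\tau$ versus $(4\pi\tau)^{-1}$ weights) need checking term by term against \eqref{899}.
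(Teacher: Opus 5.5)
Your route is the paper's own: the same splitting $\mathcal{W}_{RH}=(4\pi\tau)^{-1}\bigl(\tau\mathcal{F}+\int_M(f-2)e^{-f}\dv\bigr)$, Theorem~\ref{lem:8} for $\delta\mathcal{F}$, the variation of the zeroth-order term, and a divergence-theorem step producing $\int_{\partial M}\na_{\boldsymbol n}e^{-f}$. Steps 1--3 are correct. Step 2 is in fact cleaner than the paper: its \eqref{99-1} has a sign slip, and its \eqref{99-5} keeps a spurious $(4\pi\tau)^{-1}\sigma\mathcal{F}$, although $(4\pi\tau)^{-1}\tau$ does not depend on $\tau$. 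The gap is Step 4, which you leave as ``standard bookkeeping'', together with your prediction for the main obstacle.

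Do the expansion. Your Step 2 expression $\sigma(4\pi\tau)^{-1}\mathcal{F}-\frac{\sigma}{\tau}\mathcal{W}_{RH}$ simplifies to $-\frac{\sigma}{\tau}\int_M(f-2)(4\pi\tau)^{-1}e^{-f}\dv$, and this is the entire $\sigma$-dependence of $\delta\mathcal{W}_{RH}$. The $\sigma$-terms on the right of \eqref{899}, however, expand to $\bigl[\sigma(\norm{\na f}^2-\Delta f)-\frac{\sigma}{\tau}(f-2)\bigr](4\pi\tau)^{-1}e^{-f}$. The reason is that $g^{ij}(\Ric_{ij}+\na_i\na_j f-\a\na_i\phi^\la\na_j\phi^\la)-(2\Delta f-\norm{\na f}^2+R-\a\norm{\na\phi}^2)=\norm{\na f}^2-\Delta f$. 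All non-$\sigma$ interior terms do match, provided the second line of \eqref{899} carries the weight $(4\pi\tau)^{-1}e^{-f}$; your Step 1 has it, the printed formula does not. Since $(\norm{\na f}^2-\Delta f)e^{-f}=\Delta e^{-f}$, you end up with $\delta\mathcal{W}_{RH}$ equal to the interior part of \eqref{899}, plus $(4\pi)^{-1}$ times the boundary terms of $\delta\mathcal{F}$, plus
\[
-\sigma\int_M(\norm{\na f}^2-\Delta f)(4\pi\tau)^{-1}e^{-f}\dv=-\frac{\sigma}{4\pi\tau}\int_{\partial M}\na_{\boldsymbol n}e^{-f}\,\d A ,
\]
with $\boldsymbol n$ the outward normal.

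So the boundary term created by the conversion necessarily carries the factor $\sigma$, because it arises only from regrouping the $\sigma$-terms into $\sigma g_{ij}(\cdots)$. It therefore cannot be the $\sigma$-free term $(4\pi\tau)^{-1}\int_{\partial M}\na_{\boldsymbol n}e^{-f}\,\d A$ of $\mathcal{A}$, as you predict. In fact no correct computation can produce that term: it does not depend on $(v,h,\theta,\sigma)$, so it cannot occur in a first variation.

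What your plan actually proves is \eqref{899} corrected in two places:
\begin{itemize}
\item the weight $(4\pi\tau)^{-1}e^{-f}$ must be inserted in the second line;
\item the last term of $\mathcal{A}$ must be $-\sigma(4\pi\tau)^{-1}\int_{\partial M}\na_{\boldsymbol n}e^{-f}\,\d A$, which vanishes when $\partial f/\partial\boldsymbol n=0$, as in \eqref{p-1} and \eqref{p-2}.
\end{itemize}
The paper's proof has the same defect. Its opening divergence-theorem display does carry $\sigma$ (with $\Delta f+\norm{\na f}^2$ where $\norm{\na f}^2-\Delta f$ is meant), and the $\sigma$ is then dropped in the statement.

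Relatedly, if you rederive Lemma~\ref{lem:7} rather than import it, the $\theta$-boundary term comes out as $-2\a\int_{\partial M}\<\na_{\boldsymbol n}\phi,\theta\>e^{-f}\,\d A$ for the outward normal. That is the convention under which Lemma~\ref{lem:6} holds. So the claim ``exactly the first group in $\mathcal{A}$'' in your Step 1 should also be checked rather than assumed.
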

\begin{proof}
	 We adapt the proof in \cite[section 7.1]{MR2961788} to our case.  By divergence theorem , we get	
	$$
	\begin{aligned}
		&\sigma\int_M \left( \Delta f+\norm{\na f}^2 \right)(4\pi\tau)^{-1}e^{-f}\, \dv\\
		=&\sigma(4\pi\tau)^{-1}\int_M \left( \Delta e^{-f}\right)  \, \dv\\
		=&\sigma\int_{\partial M} \left( \na_{\boldsymbol{n}} e^{-f} \right) (4\pi\tau)^{-1} \,\dv.
	\end{aligned}
	$$

	A direct claculation gives
\begin{equation}\label{99-3}
	\begin{split}
			&\delta 	\mathcal{W}_{RH}\left(g,f,\phi,\tau\right)\\
		&=\delta \int_{M}\left[\tau\left(\left|\nabla f\right|^2+R_g\right)+f-2\right]
		\left(4\pi \tau\right)^{-1}e^{-f}\,dA_g+ \delta \int_M \tau(-\alpha \norm{\na \phi}^2)(4\pi\tau)^{-1} e^{-f}\d v.\\
		&=\delta \left\{(4\pi\tau)^{-1} \left(\tau F(g,\phi,f)+\int_M(f-2)e^{-f}\dv  \right)   \right\}.
	\end{split}
\end{equation}
	Let $\delta \tau=\sigma$, then it is easy to show 
\begin{equation}\label{99-1}
	\delta ((4\pi\tau)^{-1} )=(4\pi\tau)^{-1} \tau^{-1}\sigma,
\end{equation}
	and
	\begin{equation}\label{99-2}
		\delta \int_M(f-2)e^{-f}\dv =\int_M \bigg\{h+(f-2)\left(\frac{1}{2}\tr_gv-h \right) \bigg\}(4\pi\tau)^{-1}e^{-f}\dv.
	\end{equation}
Combing \eqref{99-1} and  \eqref{99-2} yields
\begin{equation}
	\begin{split}\label{99-4}
			&\delta ((4\pi\tau)^{-1} \int_M(f-2)e^{-f}\dv )	\\
		&=-\int_M\sigma   \frac{f-2}{\tau}(4\pi\tau)^{-1}e^{-f} \dv + \int_M \bigg\{h+(f-2)\left(\frac{1}{2}\tr_gv-h \right) \bigg\}(4\pi\tau)^{-1}e^{-f}\dv\\
		&	=\int_M \left\{h+(f-2)\left(\frac{1}{2}\tr_gv-h\right)-\sigma \frac{f-2}{\tau}\right\}(4\pi\tau)^{-1}e^{-f}\dv.
	\end{split}
\end{equation}
By  \eqref{999}, we can show that 
\begin{equation}\label{99-5}
	\begin{split}
			&\delta \left( (4\pi\tau)^{-1} \tau F(g,\phi,f)\right) \\
		=&(4\pi\tau)^{-1} \sigma F(g,\phi,f)+(4\pi\tau)^{-1}\tau \delta  F(g,\phi,f)\\
		=&\frac{1}{4\pi\tau} \sigma\int_M(R-\alpha \norm{\na \phi}^2+\norm{\na f}^2)e^{-f} \d v\\
		&\frac{1}{4\pi}\bigg\{\int_M 2\a\theta^\la\left( \Delta\phi^\la-\left\langle \na\phi^\la,\na f \right\rangle \right) e^{-f}\dv\\
		&+ \int_M-\nu^{ij}\bigg(\Ric_{ij}+\na_i\na_jf-\a \na_i\phi^\la\nabla_j\phi^\la\bigg)+\a\norm{\na \phi}^2\left( \frac{1}{2}\tr_gv-h\right) e^{-f}	 \dv\\
		&+ \int_M \left(\frac{\tr_g v}{2}-h\right)\left(2\Delta_g f-\left|\nabla f\right|^2+R_g\right)\, \d v	\\
		& +\int_{\partial M} 2\a \left\langle \na_{\boldsymbol{n}} \phi, \theta \right\rangle e^{-f}\,\d A-\int_{\partial M}\left[\frac{\partial v}{\partial \boldsymbol{n}} +\left(\tr_g  v-2h\right)\frac{\partial f}{\partial \boldsymbol{n}}\right]
		e^{-f}\,\d A\\
		&+\int_{\partial M}e^{-f}\nabla_i v_{ij}\eta^j\,\d A
		-\int_{\partial M} \nabla_je^{-f}v_{ij}\eta^i\,\d A \bigg\}.
	\end{split}
\end{equation}
Then, \eqref{899} follows from \eqref{99-3}	\eqref{99-4} \eqref{99-5} .
	
\end{proof}	
	
	Next, we turn to symplify formula \eqref{899}. 	Pulling back the solutions of  the systems 
	\begin{equation}\label{p-1}
		\begin{split}
			\begin{cases}
				&\frac{\partial}{\partial t}g_{ij} =-2\left( R_{ij}+\na_i\na_j f-\a \na_i\phi^\la\nabla_j\phi^\la \right)\quad\mbox{in}\quad M\times\left(0,T\right)\\
			&k_g\left(\cdot,t\right)=\psi\left(\cdot\right) \quad\mbox{on}\quad \partial M\times\left(0,T\right)\\
			&\frac{\partial f}{\partial t}=-\Delta_g f+\left|\nabla f\right|^2 -R_g+\a \norm{\na \phi}^2+\frac{1}{\tau} \quad\mbox{in}\quad M\times\left(0,T\right)\\
			&\frac{\partial}{\partial \boldsymbol{n}}f=0\quad\mbox{on}\quad \partial M\times\left(0,T\right)\\
			&\frac{\partial   \phi}{ \partial t  }=\Delta\phi^\la-\left\langle \na\phi^\la,\na f \right\rangle\\
			&\frac{\partial}{\partial \boldsymbol{n}} \phi=0\quad\mbox{on}\quad \partial M\times\left(0,T\right)\\
			&\frac{\partial}{\partial \boldsymbol{n}} R=0\quad\mbox{on}\quad \partial M\times\left(0,T\right)\\
			& \frac{\partial  \tau}{ \partial  t}=-1,
			\end{cases}
				\end{split}
	\end{equation}
with the family of diffeomorphisms 	generated by $\na f$, we get a solution of the systems	
	\begin{equation}\label{p-2}
		\begin{split}
			\begin{cases}
				&\frac{\partial}{\partial t}g_{ij} =-2\left( R_{ij}-\a \na_i\phi^\la\nabla_j\phi^\la \right)\quad\mbox{in}\quad M\times\left(0,T\right)\\
				&k_g\left(\cdot,t\right)=\psi\left(\cdot\right) \quad\mbox{on}\quad \partial M\times\left(0,T\right)\\
				&\frac{\partial f}{\partial t}=-\Delta_g f -R_g+\a \norm{\na \phi}^2+\frac{1}{\tau} \quad\mbox{in}\quad M\times\left(0,T\right)\\
				&\frac{\partial}{\partial \boldsymbol{n}}f=0\quad\mbox{on}\quad \partial M\times\left(0,T\right).\\
				&\frac{\partial   \phi}{ \partial t  }=\tau(\phi)\\
				&\frac{\partial}{\partial \boldsymbol{n}} \phi=0\quad\mbox{on}\quad \partial M\times\left(0,T\right).\\
				&\frac{\partial}{\partial \boldsymbol{n}} R=0\quad\mbox{on}\quad \partial M\times\left(0,T\right).\\
				& \frac{\partial  \tau}{ \partial  t}=-1
			\end{cases}
		\end{split}
	\end{equation}

	We denote $\square^*$ by $-\frac{\partial  }{ \partial t  }-\Delta-\frac{1}{2}\tr_gv$, by the diffeomorphism invariant of the functional $\mathcal{W}_{RH}$ and Theorem \ref{thm:102},  we obtained

	\begin{thm}\label{thm-7}  Let $(M^2,g(t),\phi,f)$ be surface with boundary and  evolve along flow \eqref{p-2} and  $\square^*((4\pi\tau)^{-1}e^{-f})=0.$   Then the functional $\mathcal{W}_{RH} $ given by \eqref{WRH} is non-decreasing on the interval $(0,T)$ with 
		\begin{equation}
			\begin{split}
				\frac{\d }{\d t} \mathcal{W}_{RH}\left(g,f,\phi,\tau\right)=& 2\tau\int_M \norm{\Ric_{ij}+\na_i\na_jf-\a \na_i\phi^\la\nabla_j\phi^\la-\frac{1}{2\tau}g_{ij}}^2 (4\pi\tau)^{-1} e^{-f}\dv\\
				&+2\tau\int_M  \a  \norm{\tau(\phi)-\left\langle \na \phi,\na f \right\rangle}^2(4\pi\tau)^{-1} e^{-f}\dv.
			\end{split}
		\end{equation}
	\end{thm}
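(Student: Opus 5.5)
The strategy is the standard Perelman route: use diffeomorphism invariance to pass from \eqref{p-2} to the gauge-fixed system \eqref{p-1}, then plug the specific variations into Theorem \ref{thm:102}. Since $\mathcal{W}_{RH}$ is invariant under pulling $(g,f,\phi)$ back by the same diffeomorphism, and the diffeomorphisms generated by $\na f$ carry solutions of \eqref{p-1} to solutions of \eqref{p-2}, it suffices to compute $\frac{\d}{\d t}\mathcal{W}_{RH}$ along \eqref{p-1}. There the variations are
\[
v_{ij}=-2\Big(R_{ij}+\na_i\na_j f-\a\na_i\phi^\la\na_j\phi^\la\Big),\quad
\theta^\la=\Delta\phi^\la-\langle\na\phi^\la,\na f\rangle,\quad \sigma=-1,
\]
together with $h=\partial_t f$ from \eqref{p-1}. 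The hypothesis $\square^*((4\pi\tau)^{-1}e^{-f})=0$ is exactly the statement that $\frac{\tr_g v}{2}-h-\frac{\sigma}{\tau}$ combines with the terms $\Delta f-|\na f|^2$ so that the weighted measure $(4\pi\tau)^{-1}e^{-f}\dv$ is preserved. I will use it to kill the second line of \eqref{899}; more precisely, I will verify that the coefficient $\frac{\tr_g v}{2}-h-\frac{\sigma}{\tau}$ vanishes after the $\Delta f$ terms are absorbed, which is the usual consequence of the conjugate heat equation.

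\textbf{Interior terms.} With $v_{ij}$ and $\sigma=-1$ inserted, the first line of \eqref{899} becomes
\[
2\tau\Big(\Ric_{ij}+\na_i\na_jf-\a\na_i\phi^\la\na_j\phi^\la\Big)\Big(\cdots-\tfrac{1}{2\tau}g_{ij}\Big)-g_{ij}\Big(\cdots-\tfrac{1}{2\tau}g_{ij}\Big),
\]
which is $2\tau\big\|\Ric+\na^2 f-\a\na\phi\otimes\na\phi-\frac{1}{2\tau}g\big\|^2$. Completing this square uses $\int_M(\text{trace of the tensor}-\tfrac1\tau)e^{-f}\dv$-type identities, obtained by integrating $\Delta f$ by parts against $e^{-f}$; the resulting boundary terms vanish by $\partial_{\boldsymbol n}f=0$. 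The $\theta$-term gives $2\tau\a\|\tau(\phi)-\langle\na\phi,\na f\rangle\|^2$ times the weight. Together these produce the claimed formula.

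\textbf{Boundary terms (main obstacle).} The remaining step is to show $\mathcal{A}=0$, and this is where I expect the real difficulty. Several pieces are straightforward: the term $2\a\langle\na_{\boldsymbol n}\phi,\theta\rangle$ vanishes by $\partial_{\boldsymbol n}\phi=0$, the term $(\tr v-2h)\partial_{\boldsymbol n}f$ vanishes by $\partial_{\boldsymbol n}f=0$, and $\na_{\boldsymbol n}e^{-f}=-e^{-f}\partial_{\boldsymbol n}f=0$. The genuinely delicate pieces are $\partial_{\boldsymbol n}v$ and the two terms involving $v_{ij}\eta^i$. On a surface, $\tr v=-4R-2\Delta f+2\a|\na\phi|^2$, so $\partial_{\boldsymbol n}v$ involves $\partial_{\boldsymbol n}R$ (zero by assumption) but also $\partial_{\boldsymbol n}\Delta f$ and $\partial_{\boldsymbol n}|\na\phi|^2$. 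To handle these, I would follow \cite{MR4003012}. There the divergence term $\na_iv_{ij}\eta^j$ is rewritten via the contracted Bianchi identity and $\na_i\na_i\na_j f=\na_j\Delta f+R_{jk}\na_kf$, and the normal–tangential components of $\na^2 f$ on $\partial M$ are expressed through the geodesic curvature $k_g$. Those computations produced the boundary integrals $k_gR_g-2k_g'+2k_g|\na^\top f|^2$ appearing in Theorem \ref{monotonicity2}, so I expect the same terms to appear here, plus new $\phi$-terms such as $\a\,\na^2\phi(\boldsymbol n,\cdot)\cdot\na\phi$. These $\phi$-terms should vanish using $\partial_{\boldsymbol n}\phi=0$, since its tangential derivative along $\partial M$ is zero. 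If the $k_g$-terms do not cancel outright, the nonnegativity of the flow's time derivative will be obtained from the sign assumptions $k_g\ge0$ and $k_g'\le0$ recalled before Theorem \ref{monotonicity2}, giving non-decreasing $\mathcal{W}_{RH}$ in any case.
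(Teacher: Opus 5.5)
Your route (diffeomorphism invariance, then inserting the gauged variations into Theorem \ref{thm:102}) is the paper's own; its entire justification is that one sentence. The interior algebra is also fine. With $v=-2P$, $P=\Ric+\na^2 f-\a\,\na\phi\otimes\na\phi$, and $\sigma=-1$, the first line of \eqref{899} is pointwise $2\tau\norm{P-\frac{1}{2\tau}g}^2$, so no integration by parts is needed, and the $\theta$-term gives the $\phi$-square. However, two steps fail as written. First, with $h=\partial_t f$ taken from \eqref{p-1},
\[
\frac{\tr_g v}{2}-h-\frac{\sigma}{\tau}=\bigl(-R-\Delta f+\a\norm{\na\phi}^2\bigr)-\Bigl(-\Delta f+\norm{\na f}^2-R+\a\norm{\na\phi}^2+\frac{1}{\tau}\Bigr)+\frac{1}{\tau}=-\norm{\na f}^2 .
\]
There are no $\Delta f$ terms left to absorb, so the second line of \eqref{899} does not drop out. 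The $\norm{\na f}^2$ sits on the wrong system. Along the ungauged flow, where $\frac12\tr_g\partial_t g=-S$, the hypothesis $\square^*((4\pi\tau)^{-1}e^{-f})=0$ means $\partial_t f=-\Delta f+\norm{\na f}^2-S+\frac1\tau$. Pulling back by the flow of $-\na f$ subtracts $\norm{\na f}^2$, so the correct gauged variation is $h=-\Delta f-S+\frac1\tau$, for which the coefficient is exactly $0$. Read literally, \eqref{p-2} together with $\square^*(\cdots)=0$ even forces $\na f\equiv0$, and then the $f$-equation makes $S$ spatially constant.

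Second, and more seriously, the boundary. Your claim that the $\phi$-terms vanish because $\partial_{\boldsymbol{n}}\phi=0$ is false. For the unit tangent $e$ of $\partial M$ and $k_g=\langle\na_e\boldsymbol{n},e\rangle$,
\[
\na^2\phi(e,\boldsymbol{n})=e(\partial_{\boldsymbol{n}}\phi)-(\na_e\boldsymbol{n})\phi=-k_g\,e(\phi).
\]
This is the same connection term that produces $2k_g\norm{\na^{\top}f}^2$ in \cite{MR4003012}, so you cannot keep one and drop the other. Carry out the computation using $\Ric=\frac R2 g$ and $\partial_{\boldsymbol{n}}f=\partial_{\boldsymbol{n}}\phi=0$: the $\partial_{\boldsymbol{n}}\Delta f$ terms cancel, and the $\theta$-, $\sigma$- and $\partial_{\boldsymbol{n}}f$-terms drop. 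This gives
\[
\mathcal{A}=\frac{1}{4\pi}\int_{\partial M}\bigl(\partial_{\boldsymbol{n}}S+2k_g\norm{\na^{\top}f}^2\bigr)e^{-f}\,\d A,\qquad \partial_{\boldsymbol{n}}S=\partial_{\boldsymbol{n}}R+2\a k_g\norm{\na^{\top}\phi}^2 .
\]
For a genuine, non-constant solution of the conjugate heat equation this does not vanish under $\partial_{\boldsymbol{n}}R=0$. For example, take the static flat unit disc with $\phi$ constant and $(4\pi\tau)^{-1}e^{-f}$ a positive Neumann solution of the backward heat equation that is non-constant on $\partial M$; then $\mathcal{A}=\frac{1}{2\pi}\int_{\partial M}\norm{\na^{\top}f}^2e^{-f}\,\d A>0$. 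So in that setting the displayed identity needs this term on the right-hand side, or an extra assumption such as $\psi\equiv0$. Your fallback on $k_g\ge0$, $k_g'\le0$ yields at most monotonicity, not the equality, and it is not among the hypotheses (in any case $k_g'=0$, since $k_g=\psi$). What actually closes the argument for the statement as literally written is the degeneracy from the first point. There $\na f\equiv0$ kills the coefficient $-\norm{\na f}^2$ and the $k_g\norm{\na^{\top}f}^2$ term, and $S=S(t)$ gives $\partial_{\boldsymbol{n}}S=0$, hence $\mathcal{A}=0$. This observation is missing from your proposal.
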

	\bibliographystyle{unsrt}	
	\bibliography{a.bib,b.bib,perelman.bib, F:/2025laterusedbibfile/frommrefandmrlookups,E:/myonlybib/myonlymathscinetbibfrom2023, E:/myonlybib/low-quality-bib-to-publish, E:/myonlybib/mybib2023}

\begin{thebibliography}{10}

\bibitem{zbMATH05050941}
Grisha Perelman.
\newblock Finite extinction time for the solutions to the {Ricci} flow on
  certain three-manifolds.
\newblock Preprint, {arXiv}:math/0307245 [math.{DG}] (2003)., 2003.
\newblock Id/No 0307245.

\bibitem{zbMATH05050940}
Grisha Perelman.
\newblock Ricci flow with surgery on three-manifolds.
\newblock Preprint, {arXiv}:math/0303109 [math.{DG}] (2003)., 2003.
\newblock Id/No 0303109.

\bibitem{zbMATH05050939}
Grisha Perelman.
\newblock The entropy formula for the {Ricci} flow and its geometric
  applications.
\newblock Preprint, {arXiv}:math/0211159 [math.{DG}] (2002)., 2002.
\newblock Id/No 0211159.

\bibitem{MR2961788}
Reto M\"uller.
\newblock Ricci flow coupled with harmonic map flow.
\newblock {\em Ann. Sci. \'Ec. Norm. Sup\'er. (4)}, 45(1):101--142, 2012.

\bibitem{MR3547931}
Yi~Li.
\newblock Long time existence of {R}icci-harmonic flow.
\newblock {\em Front. Math. China}, 11(5):1313--1334, 2016.

\bibitem{MR3300708}
Michael~Bradford Williams.
\newblock Results on coupled {R}icci and harmonic map flows.
\newblock {\em Adv. Geom.}, 15(1):7--26, 2015.

\bibitem{MR3175257}
Hongxin Guo and Tongtong He.
\newblock Harnack estimates for geometric flows, applications to {R}icci flow
  coupled with harmonic map flow.
\newblock {\em Geom. Dedicata}, 169:411--418, 2014.

\bibitem{MR3729736}
Jun Sun.
\newblock Conditions to extend the {R}icci flow coupled with harmonic map flow.
\newblock {\em Internat. J. Math.}, 28(12):1750091, 31, 2017.

\bibitem{MR3163480}
Yi~Li.
\newblock Eigenvalues and entropies under the harmonic-{R}icci flow.
\newblock {\em Pacific J. Math.}, 267(1):141--184, 2014.

\bibitem{MR4452202}
Paul Bracken.
\newblock Monotonicity properties of functionals under {R}icci flow on
  manifolds without and with boundary.
\newblock {\em Int. J. Geom. Methods Mod. Phys.}, 19(9):Paper No. 2250137, 21,
  2022.

\bibitem{MR4003012}
Jean~C. Cortissoz and Alexander Murcia.
\newblock The {R}icci flow on surfaces with boundary.
\newblock {\em Comm. Anal. Geom.}, 27(2):377--420, 2019.

\bibitem{MR4785569}
Keita Kunikawa and Yohei Sakurai.
\newblock Hamilton type entropy formula along the {R}icci flow on surfaces with
  boundary.
\newblock {\em Comm. Anal. Geom.}, 31(7):1655--1668, 2023.

\bibitem{MR3557306}
Panagiotis Gianniotis.
\newblock The {R}icci flow on manifolds with boundary.
\newblock {\em J. Differential Geom.}, 104(2):291--324, 2016.

\bibitem{MR3448425}
Panagiotis Gianniotis.
\newblock Boundary estimates for the {R}icci flow.
\newblock {\em Calc. Var. Partial Differential Equations}, 55(1):Art. 9, 21,
  2016.

\bibitem{MR4959090}
Rasmus Jouttij\"arvi.
\newblock Novel boundary conditions for the {R}icci flow.
\newblock {\em J. Geom. Anal.}, 35(11):Paper No. 360, 43, 2025.

\bibitem{MR1387799}
Ying Shen.
\newblock On {R}icci deformation of a {R}iemannian metric on manifold with
  boundary.
\newblock {\em Pacific J. Math.}, 173(1):203--221, 1996.

\bibitem{MR3037998}
Hongxin Guo.
\newblock An entropy formula relating {H}amilton's surface entropy and
  {P}erelman's {$W$} entropy.
\newblock {\em C. R. Math. Acad. Sci. Paris}, 351(3-4):115--118, 2013.

\end{thebibliography}
	
\end{document}